\newtheorem{thm}{Theorem}[section] 
\newtheorem{prop}[thm]{Proposition}
\newtheorem{cor}[thm]{Corollary}
\theoremstyle{definition} 
\newtheorem{defn}{Definition} 
\newtheorem{rem}[thm]{Remark} 
\newtheorem{exa}{Example}
\numberwithin{equation}{section}
\begin{document}

\title{Frame bundle approach to generalized minimal submanifolds}

\author[K. Niedzia\l omski]{Kamil Niedzia\l omski}
\address{
Department of Mathematics and Computer Science \endgraf
University of \L\'{o}d\'{z} \endgraf
ul. Banacha 22, 90-238 \L\'{o}d\'{z} \endgraf
Poland\endgraf
}
\email{kamiln@math.uni.lodz.pl}

\begin{abstract}
We extend the notion of $r$--minimality of a submanifold in arbitrary codimension to $u$--minimality for a multi--index $u\in\mathbb{N}^q$, where $q$ is the codimension. This approach is based on the analysis on the frame bundle of orthonormal frames of the normal bundle to a submanifold and vector bundles associated with this bundle. The notion of $u$--minimality comes from the variation of $\sigma_u$--symmetric function obtained from the family of shape operators corresponding to all possible bases of the normal bundle. We obtain the variation field, which gives alternative definition of $u$--minimality. Finally, we give some examples of $u$--minimal submanifolds for some choices of $u$.
\end{abstract}

\keywords{Submanifold; mean curvatures; first variation; $u$--minimal submanifold; generalized Newton transformation}
\subjclass[2010]{53C40; 53C42}

\maketitle

\section{Introduction}

The notion of minimality of a submanifold has long history. It has been considered by many authors within many contexts. Minimality can be defined by vanishing of the mean curvature -- the trace of the second fundamental form. The other possible extrinsic conditions we may impose on a submanifold are: total geodesicity, when second fundamental form vanishes, and total umbilicity, when the second fundamental form is proportional to the mean curvature vector.

While studying hypersurfaces, i.e. codimension one submanifolds, we may consider more invariants, which come from eigenvalues of the second fundamental form. We define the $r$--th curvature as a $r$--th symmetric function of the principal curvatures. This has been extensively considered since Reilly's work \cite{Rei}. Let us briethly recall his approach. Let $L$ be a hypersurface in a Riemannian manifold $(M,g)$ with the Levi--Civita connection $\nabla$. Denote by $B$ the second fundamental form and by $A=A^N$ the shape operator corresponding to a choice of normal vector $N$, 
\begin{equation*}
B(X,Y)=(\nabla_X Y)^{\bot},\quad g(A(X),Y)=g(B(X,Y),N),\quad X,Y\in TL.
\end{equation*}  
The symmetric function $S_r$ of $A$ is defined by the characteristic polynomial
\begin{equation*}
\chi_A(t)=\det (I+tA)=\sum_{r=0}^n S_r t^r,
\end{equation*}
where $n=\dim L$. It is a remarkable observation that the variation of $S_r$ is described by the Newton transformation $T_r=T_r(A)$. Namely, let
\begin{equation*}
T_r=\sum_{j=0}^r (-1)^j S_j A^{r-j},\quad r=0,1,\ldots,n.
\end{equation*}
Then, we have
\begin{equation}\label{eq:introvarsigmar}
\frac{d}{dt}S_r(t)={\rm tr}\left(\frac{d}{dt}A(t)\cdot T_{r-1}\right),
\end{equation}
where $S_r(t)$ is a one parameter family of symmetric functions corresponding to a family of operators $A(t)$, which satisfies $A(0)=A$ \cite{Rei}. With the use of the above formula Reilly obtained the variation of the integral of $S_r$ in the case of $M$ being of constant sectional curvature. The Euler--Lagrange equation defines the notion of $r$--minimality. Studies on submanifolds satisfying certain conditions involving $r$--th mean curvature has been recently very fruitful \cite{AC,ALM,BC,BS,CR}. Analogous considerations has been  also led in the case of foliations \cite{BC,AW1}. 

There has been several attempts to generalize this approach to submanifolds of arbitrary codimension (see \cite{Rei2,CL,CL2}) and to the case of arbitrary foliations \cite{BN,AW2}. The problems which appear are the following. First of all, the normal bundle to a submanifold is not trivial in a sense that the covariant derivative $\nabla^{\bot}$ does not annihilate unit vector fields. Secondly, there is no canonical choice of the orthonormal basis in the normal bundle. The second problem has been overcome (see \cite{Rei2,Gro}) by introducing transformations which depend only on the points on the manifold,
\begin{equation*}
(T_r)^i_j=\frac{1}{r!}\sum_{i_1,\ldots,i_r;j_1,\ldots,j_r}\delta^{i_1\ldots,i_r i}_{j_1\ldots, j_r j}g(B_{i_1j_1},B_{i_2j_2})\ldots g(B_{i_{r-1}j_{r-1}},B_{i_rj_r})
\end{equation*}  
for $r$ even, and
\begin{equation*}
(T_r^{\alpha})^i_j=\sum_{i_1,\ldots,i_r;j_1,\ldots,j_r}\delta^{i_1\ldots,i_r i}_{j_1\ldots, j_r j}g(B_{i_1j_1},B_{i_2j_2})\ldots g(B_{i_{r-2}j_{r-2}},B_{i_{r-1}j_{r-1}})(A^{\alpha})^i_j
\end{equation*}
for $r$ odd, where $\delta^{i_1,\ldots,i_r}_{j_1,\ldots,j_r}$ is the generalized Kronecker symbol and $B_{ij}=B(e_i,e_j)$ for some choice of orthonormal basis $(e_i)$. Then
\begin{align*}
S_r &=\frac{1}{r}\sum_{i,j,\alpha}(T_{r-1}^{\alpha})^i_j(A^{\alpha})^j_i && \textrm{for $r$ even},\\
{\bf S}_r &=\frac{1}{r}\sum_{i,j,\alpha}(T_{r-1})^i_j(A^{\alpha})^j_ie_{\alpha} && \textrm{for $r$ odd},
\end{align*}
denote the $r$--th symmetric function (vector field) of curvatures. In \cite{CL} the authors studied $r$--th minimality and stability with respect to these generalized transformations. The stability conditions led to non--existence results of stable minimal submanifolds on spheres \cite{Sim,CL}. 

In this article, we define the notion of minimality associated with the generalized Newton transformation introduced by Andrzejewski, Koz\l owski and the author \cite{AKN}. The idea of comes from then definition of symmetric functions associated with a system ${\bf A}=(A_1,\ldots,A_q)$ of matrices (endomorphisms),
\begin{equation*}
\chi_{\bf A}({\bf t})=\sum_{u}\sigma_u {\bf t}^u,
\end{equation*}
where $u=(u_1,\ldots,u_q)\in\mathbb{N}^q$ is a multi--index, ${\bf t}=(t_1,\ldots,t_q)$ and ${\bf t}^u=t_1^{u_1}\ldots, t_q^{u_q}$. Generalized Newton transformation $T_u=T_u({\bf A})$ depends on the multi--index $u$ and a system of endomorphisms ${\bf A}$ and its recursive definition is the following
\begin{equation*}
T_{\bf 0}=I,\quad T_u=\sigma_u I-\sum_{\alpha} A_{\alpha}T_{\alpha_{\flat}(u)},
\end{equation*} 
where $\alpha_{\flat}(u)$ is a multi--index obtained from $u$ by subtracting $1$ in the $\alpha$--th coordinate. Moreover, it satisfies analogue of the condition \eqref{eq:introvarsigmar}, namely,
\begin{equation*}
\frac{d}{dt}\sigma_u(t)=\sum_{\alpha} {\rm tr}\left(\frac{d}{dt}A_{\alpha}(t)\cdot T_{\alpha_{\flat}(u)}\right).
\end{equation*}

With the use of above characterization of $T_u$ we study the variation of $\sigma_u$. In this setting, $\sigma_u$ are symmetric functions of the family of shape operators $(A^{e_1},\ldots,A^{e_q})$ corresponding to the choice of the orthonormal basis $(e_{\alpha})$ in the normal bundle to $L\subset M$. Thus $\sigma_u$ are functions on the principal bundle $P$ of all normal orthonormal frames over the submanifold. The integrals of $\sigma_u$ over $P$ with respect to normalized measure are called total extrinsic curvatures and denoted by $\hat{\sigma_u}$. 

Notice, that in the case of distributions and foliations different types of curvatures has been recently considered \cite{RW,Rov}, mainly, in order obtain integral formulas. The ones used in this context have been introduced by Andrzejewski, Koz\l owski and the author \cite{AKN} for distributions to obtain different types of integral formulas related to extrinsic geometry. 

In this article, we derive the formula for the variation $\frac{d}{dt}\hat{\sigma}_u$. We show that at $t=0$
\begin{equation*}
\frac{d}{dt}\hat{\sigma}_u =\int_L g(R_u+W_u-S_u,V)\,d{\rm vol}_L,
\end{equation*} 
where $V$ is the variation field and $R_u, S_u, W_u$ are integrals over the fibers of the following sections of the normal bundle to $L$ over $P$,
\begin{align*}
&{\bf R}_u=-\sum_{\alpha,\beta}{\rm tr}((R(e_{\alpha},\cdot)e_{\beta})^{\top} T_{\alpha_{\flat}(u)})e_{\beta}, &&{\bf S}_u=\sum_{\alpha}{\rm tr}(A_{\alpha}T_u)e_{\alpha},\\
&{\bf W}_u=\sum_{\alpha}{\rm div}({\rm div} T_{\alpha_{\flat}(u)})e_{\alpha},
\end{align*}
respectively, where $X_{\alpha}$ denotes the $\alpha$ coefficient of $X$ with respect to $(e_{\alpha})\in P$. This condition can be interpreted the case of $M$ being a space form with sectional curvature $c$. Then divergences of $T_{\alpha_{\flat}(u)}$ vanish and ${\bf R}_u$ reduces to $c(n-|u|+1){\bf H}_u$, where $n=\dim L$ and
\begin{equation*}
{\bf H_u}=\sum_{\alpha}\sigma_{\alpha_{\flat}(u)}e_{\alpha}.
\end{equation*} 
Hence, in this case, $u$--minimality is equavalent to the condition (see Theorem \ref{thm:uminimality})
\begin{equation*}
c(n+1-|u|)H_u=S_u.
\end{equation*}
These results generalize the codimension one results by Reilly \cite{Rei} and, in a sense, the results of Cao and Li \cite{CL}. This is due to the fact that $T_u$ in codimension one reduces to the classical Newton transformation $T_r$, whereas transformations $T_r$ in arbitrary codimension used in \cite{CL} are obtained from $T_u$ for multi--indices of length $|u|=r$ (see \cite{AKN}). In the end, we give some examples of $u$--minimal submanifolds. Among others, we show that ${\bf 0}$--minimality is equivalent to classical notion of minimality.  

Throughout the paper, we use the following index convention:
\begin{equation*}
i,j,k=1,2,\ldots,\dim L;\quad \alpha,\beta,\gamma=q+1,q+2,\ldots,\dim M.
\end{equation*}

\subsection*{Acknowledgment}
The author wishes to thank Krzysztof Andrzejewski, Wojciech Koz\l owski and Pawe{\l} Walczak for fruitful conversations and discussions on this topic. Last but not least, the author wishes to thank anonymous referee for critical look at the previous version of the article, for finding many mistakes and for many valuable comments.

\section{Generalized Newton transformation and its basic properties}

In \cite{AKN} the authors introduced the notion of the generalized Newton transformation in order to study geometry of foliations of codimension higher than one. This transformation generalizes the classical Newton transformation to the case of finite family of operators. Let us recall this notion and state its properties.  

Fix a positive integer $q$ and let $u\in\mathbb{N}^q$ ($\mathbb{N}$ denotes here the set of all non--negative integers). We define the length of $u$ by $|u|=u_1+u_2+\ldots,+u_q$. Let ${\bf A}=(A_1,A_2,\ldots,A_q)$ be a family of square $n$ by $n$ operators (matrices) on some vector space $V$. Then we can consider the characteristic polynomial $\chi_{\bf A}$ of the form
\begin{equation*}
\chi_{\bf A}({\bf t})=\det (I+{\bf t}{\bf A}),
\end{equation*}
where ${\bf t}=(t_1,\ldots,t_q)$ and ${\bf t}{\bf A}=t_1A_1+t_2A_2+\ldots+t_qA_q$. Clearly,
\begin{equation*}
\chi_{\bf A}({\bf t})=\sum_{|u|\leq n}\sigma_u({\bf A}){\bf t}^u,
\end{equation*}
for some constants $\sigma_u=\sigma_u({\bf A})$, where ${\bf t}^u=t_1^{u_1}t_2^{u_2}\ldots t_q^{u_q}$ for $u=(u_1,\ldots,u_q)$. We call $\sigma_u$ the symmetric functions associated with the system ${\bf A}$.

\begin{defn}
A system $T_u=T_u({\bf A})$, $|u|\leq n$, of endomorphisms is called the {\it generalized Newton transformation} if the following condition holds: Let ${\bf A}(t)$ be a one parameter family of operators such that ${\bf A}(0)={\bf A}$ and let $\sigma_u(t)$ be the corresponding symmetric functions. Then
\begin{equation}\label{eq:defTu}
\frac{d}{dt}\sigma_u(t)_{t=0}=\sum_{\alpha}{\rm tr}\left(\frac{d}{dt}A_{\alpha}(t)_{t=0}\cdot T_{\alpha_{\flat}(u)}\right),
\end{equation} 
where $\alpha_{\flat}(u)$ denotes the multi--index obtained from $u$ by subtracting $1$ in the $\alpha$--th coordinate.
\end{defn}
One can show that $(T_u)_{|u|\leq n}$ exists and is unique \cite{AKN}. Moreover, it can be characterized by the following recursive correspondence
\begin{align}
T_0 &=I, && 0=(0,0,\ldots,0), \notag\\
T_u &=\sigma_u I-\sum_{\alpha} A_{\alpha}T_{\alpha_{\flat}(u)}
\label{eq:recurencepropTu}\\
&=\sigma_u I-\sum_{\alpha} T_{\alpha_{\flat}(u)}A_{\alpha},
&& |u|\geq 1.\notag
\end{align}  

The generalized Newton transformation has the following useful properties \cite{AKN}.
\begin{prop}\label{prop:propTu}
Let ${\bf A}=(A_1,\ldots,A_q)$ be a family of operators on a vector space $V$. Let $(T_u)$ be the generalized Newton transformation associated with ${\bf A}$ and let $\sigma_u$ be the corresponding symmetric functions. Then the following relations hold
\begin{align*}
|u|\sigma_u &=\sum_{\alpha}{\rm tr}(A_{\alpha}T_{\alpha_{\flat}(u)}),\\
{\rm tr}T_u &=(n-|u|)\sigma_u.
\end{align*} 
\end{prop}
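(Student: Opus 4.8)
The plan is to derive the first identity from a homogeneity (Euler-type) argument built directly on the defining property \eqref{eq:defTu}, and then to obtain the second identity as an immediate consequence of the recursion \eqref{eq:recurencepropTu} together with the first. The single structural fact I need to set up at the start is that each $\sigma_u$ is a homogeneous function of degree $|u|$ in the family ${\bf A}$.

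First I would record this homogeneity. Rescaling the whole family by a factor $s$ replaces ${\bf t}{\bf A}$ by $(s{\bf t}){\bf A}$, so $\chi_{s{\bf A}}({\bf t})=\det(I+(s{\bf t}){\bf A})=\chi_{\bf A}(s{\bf t})=\sum_u \sigma_u({\bf A})\,s^{|u|}{\bf t}^u$, and comparing coefficients of ${\bf t}^u$ gives $\sigma_u(s{\bf A})=s^{|u|}\sigma_u({\bf A})$. I would then apply \eqref{eq:defTu} to the particular one--parameter family $A_{\alpha}(t)=(1+t)A_{\alpha}$, for which ${\bf A}(0)={\bf A}$ and $\frac{d}{dt}A_{\alpha}(t)_{t=0}=A_{\alpha}$. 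The right-hand side of \eqref{eq:defTu} becomes $\sum_{\alpha}{\rm tr}(A_{\alpha}T_{\alpha_{\flat}(u)})$, with $T_{\alpha_{\flat}(u)}$ evaluated at the base family ${\bf A}$, while the left-hand side, using $\sigma_u(t)=(1+t)^{|u|}\sigma_u$, equals $|u|\sigma_u$. Equating the two yields $|u|\sigma_u=\sum_{\alpha}{\rm tr}(A_{\alpha}T_{\alpha_{\flat}(u)})$.

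For the second identity I would simply take the trace of the recursive formula \eqref{eq:recurencepropTu}. Since the operators act on an $n$--dimensional space, ${\rm tr}\,I=n$, so ${\rm tr}\,T_u=n\sigma_u-\sum_{\alpha}{\rm tr}(A_{\alpha}T_{\alpha_{\flat}(u)})$; substituting the first identity $\sum_{\alpha}{\rm tr}(A_{\alpha}T_{\alpha_{\flat}(u)})=|u|\sigma_u$ gives ${\rm tr}\,T_u=(n-|u|)\sigma_u$ at once. Both identities should also be checked at the base case $u={\bf 0}$, where $\sigma_{\bf 0}=1$, $T_{\bf 0}=I$, and the convention $T_{\alpha_{\flat}({\bf 0})}=0$ makes the sums empty, so that the relations read $0=0$ and ${\rm tr}\,I=n$ respectively.

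The computations are short; the only point requiring care — and the main obstacle — is the legitimacy of the homogeneity step. One must verify that the characterization \eqref{eq:defTu} may indeed be applied to the rescaling family with the transformations $T_{\alpha_{\flat}(u)}$ frozen at ${\bf A}$, i.e. that \eqref{eq:defTu} determines the same $T_u$ independently of the admissible variation chosen; this is exactly the existence-and-uniqueness statement cited from \cite{AKN}. Once that is granted, matching the degree-$|u|$ homogeneity of $\sigma_u$ against the linear variation $A_{\alpha}(t)=(1+t)A_{\alpha}$ is precisely what forces the coefficient $|u|$, and everything else is formal.
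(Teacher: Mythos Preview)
Your argument is correct. The homogeneity observation $\sigma_u(s{\bf A})=s^{|u|}\sigma_u({\bf A})$ is exactly what is needed, and feeding the scaling family $A_\alpha(t)=(1+t)A_\alpha$ into the defining relation \eqref{eq:defTu} is a legitimate application of that relation, since by definition it must hold for \emph{every} one--parameter deformation with ${\bf A}(0)={\bf A}$; no extra uniqueness subtlety arises here. The second identity then follows from tracing \eqref{eq:recurencepropTu}, as you say, and your base--case check with the convention $T_{\alpha_\flat({\bf 0})}=0$ is fine.

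As for comparison: the paper does not actually prove Proposition~\ref{prop:propTu}; it merely records the two identities and defers to \cite{AKN}. So there is no in--paper argument to weigh yours against. Your Euler--type derivation is a clean self--contained proof and could stand in place of the citation.
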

 
One can obtain the direct formula for $T_u$ and for $\sigma_u$ (see \cite{AKN}, eq. (3) and (10)).

\section{Frame bundle approach to submanifold geometry}\label{sec:Submgeom}

\subsection{Bundle constructions}
Let $(M,g)$ be a Riemannian manifold, $\varphi:L\mapsto M$ a codimension $q$ submanifold, such that $\varphi$ is an isometric immersion. Thus, we identify $L$ with its image $\varphi(L)\subset M$. Let $\nabla$ be the Levi-Civita connection of $g$. Consider the pull--back bundle $TM|L$ over $L$, which is just a tangent bundle of $M$ restricted to $L$. This bundle splits into two bundles $TL$ and $T^{\bot}L$ with respect to the decomposition
\begin{equation}\label{eq:TMsplitting}
T_M=T_pL\oplus T_p^{\bot}L,\quad p\in L.
\end{equation}
Denote by $\nabla^{\top}$ and $\nabla^{\bot}$ the induced connections in these bundles, respectively. 

Let $P=O(T^{\bot}L)$ (or $P=SO(T^{\bot}L)$ assuming $L$ is transversally orientable) be a bundle of (oriented) orthonormal frames of $T^{\bot}L$. Alternatively, $\Gamma(P)$ consists of all sections, which assign an orthonormal basis $(p,e)=(e_{\alpha})_{\alpha=1,\ldots,q}$, where $e_{\alpha}\in T^{\bot}_pL$, to any point $p\in L$. 

Moreover, let $\pi_P^{-1}TM|L$ be a vector bundle over $P$ with the fiber
\begin{equation*}
(\pi_P^{-1}TM|L)_{(p,e)}=T_pM.
\end{equation*}
This bundle splits into direct (orthogonal) sum of bundles $E$ and $E'$ with respect to the decomposition \eqref{eq:TMsplitting}. In other words,
\begin{equation*}
E_{(p,e)}=T_pL,\quad E'_{(p,e)}=T^{\bot}_pL.
\end{equation*}
Notice that both bundles $E$ and $E'$ are pull--back bundles $\pi^{-1}_PTL$ and $\pi_P^{-1}T^{\bot}L$, respectively. The connections $\nabla^{\top}$ and $\nabla^{\bot}$ induce pull--back connections $\nabla^E$ and $\nabla^{E'}$ in bundles $E$ and $E'$, respectively, i.e.
\begin{equation*}
\nabla^E_X (Y\circ \pi_P)=\nabla^{\top}_{\pi_{P\ast}X}Y,\quad
\nabla^{E'}_X (Y\circ \pi_P)=\nabla^{\bot}_{\pi_{P\ast}X}Z
\end{equation*}  
where $X\in TP$, $Y\in\Gamma(TL)$, $V\in\Gamma(T^{\bot}L)$.

\subsection{Frame bundle calculus}\label{subsec:bundlecalculus}
In this subsection we briefly review some basic facts about differentiation and integration on the frame and associated vector bundles, which we will need later. For detailed (general) description see \cite[Appendix]{AKN}. Let us keep the notation from the previous subsection.

Connection $\nabla^{\bot}$ on $L$ induces horizontal distribution $\mathcal{H}$ on $P$. Then, each vector $X\in T_pL$ has a unique horizontal lift $X^h_{(p,e)}$ to $\mathcal{H}_{(p,e)}$. Let $e$ be a local section of $P$ such that $e_{p\ast} X=X^h_{(p,e)}$. Then $X^h_{(p,e)}f=X(f\circ e)(p)$ for any $f\in C^{\infty}(P)$. Moreover, for such $e$ we have at $p$
\begin{equation*}
(\nabla^E_{X^h}Y)\circ e=\nabla^{\top}_X(Y\circ e),\quad (\nabla^{E'}_{X^h}Y)\circ e=\nabla^{\bot}_X(Y\circ e)
\end{equation*}

Denoting by $\lambda_G$ the (normalized) Haar measure on $G$, we may define integration on the fiber $P_p$ as follows
\begin{equation*}
\int_{P_p} f\,d{\rm vol}_{P_p}=\int_G f((p,e_0)\cdot g)\, d\lambda_G.
\end{equation*}
One can see that the integral on the right hand side does not depend on the choise of the reference basis $(p,e_0)$. Denote the above integral by $\hat{f}(p)$. Then integral over $P$ is just the integral of $\hat{f}$ over $L$. The following useful formula holds
\begin{equation*}
\widehat{X^hf}=X\hat{f},\quad X\in\Gamma(TL),\quad f\in C^{\infty}(P).
\end{equation*}
Moreover, notice that if $Y\in \Gamma(E)$ (or $Y\in\Gamma(E')$, respectively), then we may average $Y$ in the fibers to obtain a vector field $\hat{Y}\in \Gamma(TL)$ (or $\hat{Y}\in\Gamma(T^{\bot}L)$, respectively) by integrating coordinates of $Y$. It can be shown that such integral is well defined, i.e. independent of the choice of point--wise basis.

Finally, let us define the notion of gradient, hessian and divergence with respect to $E$ and restricted to the horizontal distribution. Denote by ${\rm div}_E T_u$ the divergence of $T_u$, which a section of the bundle $E$ of the form
\begin{equation*}
{\rm div}_E T_u=\sum_i (\nabla^E_{e_i^h} T_u)(e_i\circ\pi_P),
\end{equation*}
where $(e_i)$ is any orthonormal basis on $L$. Then the following recurrence formula holds \cite{AKN}
\begin{equation}\label{eq:divTu}
{\rm div}_E T_u=-\sum_{\alpha} A_{\alpha}({\rm div}_E T_{\alpha_{\flat}(u)})+\sum_{\alpha,i}R(e_{\alpha},T_{\alpha_{\flat}(u)}e_i)e_i)^{\top},\quad {\rm div}_E T_0=0.
\end{equation}

Moreover, ${\rm grad} f\in \Gamma(E)$, ${\rm Hess}\,f\in\Gamma(E^{\ast}\otimes E^{\ast})$ and ${\rm div}_E(W)\in C^{\infty}(P)$ are defined as follows
\begin{align*}
& g({\rm grad}_E\,f,X)=X^hf, &&f\in C^{\infty}(P),\quad X\in TL,\\
& {\rm Hess}\,f(X,Y)=g(\nabla^E_{X^h}{\rm grad}_E\,f,Y), && f\in C^{\infty}(P),\quad X,Y\in TL,\\
& {\rm div}_E(W)=\sum_i g(\nabla^E_{e_i^h}W,e_i\circ\pi_P), &&W\in \Gamma(E),
\end{align*}
where $(e_i)$ is an orthonormal basis in $TL$. Notice that ${\rm Hess}\,f$ is not, in general symmetric in $X,Y$. The skew--symmetric part $R(X,Y)^{\ast}f$, where upper index $\ast$ denotes the fundamental vertical vector field,  vanishes if $f$ is constant on the fibers, hence, if $f=f_L\circ\pi_P$ for some $f_L\in C^{\infty}(L)$.

\subsection{Generalized extrinsic curvatures}
Let $N\in T^{\bot}_pL$. Then $N$ induces the shape operator $A^N:T_pL\to T_pL$ by the formula
\begin{equation*}
A^N(X)=-\left(\nabla_XN\right)^{\top} ,\quad X\in T_pL.
\end{equation*}
Varying $N\in\Gamma(E')$ we have $A^N\in\Gamma({\rm End}(E))$. For an element $(p,e)$ of $P$ we may consider a family of shape operators
\begin{equation*}
{\bf A}(p,e)=(A^{e_1},A^{e_2},\ldots,A^{e_q}),\quad (p,e)=(e_1,\ldots,e_q).
\end{equation*}
We will write $A_{\alpha}$ instead of $A^{e_{\alpha}}$. To each such family we can associate the symmetric functions
\begin{equation*}
\sigma_u(p,e)=\sigma_u({\bf A}(p,e)),
\end{equation*} 
where $u=(u_1,u_2,\ldots,u_q)\in\mathbb{N}^q$ is a fixed multi--index. Thus $\sigma_u\in C^{\infty}(P)$. Moreover, the generalized Newton transformation $T_u$ associated with ${\bf A}$ is a section of ${\rm End}(E)$.

\begin{defn} 
Function $\sigma_u$ induces a smooth function $\hat{\sigma}_u$ on $L$,
\begin{equation*}
\hat{\sigma}_u(p)=\int_{P_p}\sigma_u\,d{\rm vol}_{P_p},\quad p\in L.
\end{equation*}
We call $\hat{\sigma}_u$ the {\it generalized extrinsic curvature} of $L$ or $u$--{\it extrinsic curvature}. 
\end{defn}

\begin{rem}
For some choices of $u$ the generalized extrinsic curvatures vanish, but there are many multi--indices $u$ for which $\hat{\sigma_u}$ is non--zero and gives information on the submanifold.

For indices $1\leq\alpha_1<\ldots<\alpha_k\leq q$, consider a map $F_{\alpha_1,\ldots,\alpha_k}:P\to P$ which maps vector $e_{\alpha_i}$ to $-e_{\alpha_i}$. Then
\begin{equation*}
\sigma_u(F_{\alpha_1,\ldots,\alpha_k}(p,e))=(-1)^{u_{\alpha_1}+\ldots+u_{\alpha_k}}\sigma_u(p,e).
\end{equation*}
Therefore the following two conditions hold (compare \cite{AKN}):
\begin{itemize}
\item if $G=O(q)$ and at least one of indices $u_1,\ldots,u_q$ is odd, then $\hat{\sigma}_u=0$,
\item if $G=SO(q)$ and there is one index odd and one even, then $\hat{\sigma}_u=0$.
\end{itemize}
It follows, that if all indices $u_1,\ldots,u_q$ are even then $\sigma_u$ is not, in general, equal to zero. Thus the first nontrivial case is $u=\alpha^{\sharp}\alpha^{\sharp}(0)$ (compare Example \ref{ex:4}).

Moreover, consider the map $H_{\tau}:P\to P$, where $\tau\in\mathfrak{S}_q$ is a permutation of $q$ elements, which permutes the elements in the basis,
\begin{equation*}
H_{\tau}(p,e_1,\ldots,e_q)=(p,e_{\tau(1)},\ldots,e_{\tau(q)}).
\end{equation*} 
Then, looking at the both sides of the characteristic polynomial of the system ${\bf A}=(A_1,\ldots,A_q)$ we conclude that 
\begin{equation*}
\sigma_u(H_{\tau}(p,e))=\sigma_{u\circ \tau}(p,e).
\end{equation*}
Integrating both sides over $P$ we obtain the following relations:
\begin{itemize}
\item if $G=O(q)$, then $\hat{\sigma}_u=\hat{\sigma}_{u\circ\tau}$ for all permutations $\tau\in\mathfrak{S}_q$,
\item if $G=SO(q)$, then $\hat{\sigma}_u=\hat{\sigma}_{u\circ\tau}$ for all even permutations $\tau\in\mathfrak{S}^{\rm even}_q$.
\end{itemize}
Therefore, instead of $\hat{\sigma}_u$ we may consider quantities
\begin{align*}
\hat{\sigma}_u &=\frac{1}{q!}\sum_{\tau\in\mathfrak{S}_q}\hat{\sigma}_{u\circ\tau} && \textrm{for $G=O(q)$},\\
\hat{\sigma}_u &=\frac{1}{(q-1)!}\sum_{\tau\in\mathfrak{S}^{\rm even}_q}\hat{\sigma}_{u\circ\tau} && \textrm{for $G=SO(q)$}.
\end{align*}
Notice, that in particular $\alpha^{\sharp}\alpha^{\sharp}(0)$--minimality is equivalent to being a critical point of the variation of $\frac{1}{q}\left(\hat{\sigma}_{(2,0,\ldots,0,0)}+\ldots+\hat{\sigma}_{(0,0,\ldots,0,2)}\right)$ (compare Example \ref{ex:4} in the last section).
\end{rem}

Let us define for a fixed multi--index $u\in\mathbb{N}^q$ four sections ${\bf R}_u,{\bf H}_u,{\bf S}_u, {\bf W}_u\in\Gamma(E')$. Namely, we put
\begin{equation}\label{eq:boldHuandSu}
\begin{split}
{\bf R}_u(p,e) &=-\sum_{\alpha,\beta} {\rm tr}(R_{\alpha\beta} T_{\alpha_{\flat}(u)})e_{\beta}, \\
{\bf H}_u(p,e) &=\sum_{\alpha}\sigma_{\alpha_{\flat}(u)}e_{\alpha},\\
{\bf S}_u(p,e) &=\sum_{\alpha}{\rm tr}(A_{\alpha}T_u)e_{\alpha}, \\
{\bf W}_u(p,e) &=\sum_{\alpha}{\rm div}_E({\rm div}_E T_{\alpha_{\flat}(u)})e_{\alpha}, 
\end{split}
\end{equation} 
where $R_{\alpha\beta}\in \Gamma(E^{\ast}\otimes E^{\ast})$ is given by
\begin{equation}\label{eq:Ralphabeta}
R_{\alpha,\beta}(p,e)(X,Y)=g(R(e_{\alpha},X)e_{\beta},Y),\quad X,Y\in T_pL.
\end{equation}
These sections induce sections $R_u$, $H_u$ and $S_u$ of $T^{\bot}L$, respectively, by integrating over the fibers. Let us describe above quantities in the codimension one case. We have
\begin{align*}
R_r &=-{\rm tr}(R_{NN}T_{r-1})N=\sum_i (R(N,T_{r-1}e_i)e_i)^{\bot},\\
H_r &=\sigma_{r-1}N,\\
S_r &={\rm tr}(AT_r)N=(r+1)\sigma_r N,\\
W_r &={\rm div}({\rm div} T_{r-1})N.
\end{align*}

\subsection{Differential operators}
In this subsection we generalize some of the results obtained by Rosenberg \cite{Ros} concerning differential operators induced by the Newton transformation. Let us begin with the fact about differentiation of symmetric functions $\sigma_u$ in the horizontal direction \cite{AKN}. Denote by $\mathcal{H}$ the horizontal distribution in $P$ induced by the connection $\nabla^{\bot}$.
\begin{prop}\label{prop:differentialsigmau}
The following relations hold
\begin{equation}\label{eq:differentialsigmau}
\begin{split}
X^h\sigma_u &=\sum_{\alpha}{\rm tr}\left(\left(\nabla^E_{X^h}A_{\alpha}\right)\cdot T_{\alpha_{\flat}(u)}\right)\\
&=\sum_{\alpha}{\rm tr}\left(\left(\nabla^E_{X^h}A\right)_{\alpha}\cdot T_{\alpha_{\flat}(u)}\right),\quad X\in TL.
\end{split}
\end{equation} 
\end{prop}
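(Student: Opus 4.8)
The plan is to deduce \eqref{eq:differentialsigmau} from the variational identity \eqref{eq:defTu}, which is precisely the defining property of the generalized Newton transformation, together with the elementary horizontal--lift calculus of Section~\ref{subsec:bundlecalculus}. The one genuine subtlety is that \eqref{eq:defTu} governs a one--parameter family of operators on a \emph{fixed} vector space, whereas here the shape operators $A_{\alpha}(p,e)$ act on the varying fibre $E_{(p,e)}=T_pL$; bridging this gap is exactly the role of the connection $\nabla^E$.

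First I would fix $X\in T_pL$ and a local section $e$ with $e_{p\ast}X=X^h$, so that $X^h\sigma_u=X(\sigma_u\circ e)(p)$. Writing $\gamma$ for a curve in $L$ with $\gamma(0)=p$, $\dot\gamma(0)=X$ and setting $c=e\circ\gamma$, a curve in $P$ with $\dot c(0)=X^h$, we have $X^h\sigma_u=\frac{d}{dt}\sigma_u(c(t))|_{t=0}$. Using $\nabla^E$--parallel transport $P_t\colon E_{c(0)}\to E_{c(t)}$ along $c$ I would form the conjugated operators $\tilde A_{\alpha}(t)=P_t^{-1}\circ A_{\alpha}(c(t))\circ P_t$ on the fixed space $V=T_pL$. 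Two invariance observations then do the work: since $\sigma_u$ consists of the coefficients of $\det(I+{\bf t}{\bf A})$ it is unchanged under simultaneous conjugation of all $A_{\alpha}$, so $\sigma_u(c(t))=\sigma_u(\tilde{\bf A}(t))$; and since parallel transport in ${\rm End}(E)$ acts by conjugation, $\frac{d}{dt}\tilde A_{\alpha}(t)|_{t=0}=\nabla^E_{X^h}A_{\alpha}$ (note $P_0={\rm id}$, so at the base point this is literally the covariant derivative). Applying \eqref{eq:defTu} to $\tilde{\bf A}$, with $T_{\alpha_{\flat}(u)}$ its Newton transformation at $t=0$ (which agrees with the geometric $T_{\alpha_{\flat}(u)}$ at $(p,e)$), and using that the trace is conjugation invariant, I obtain the first equality.

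It remains to replace $\nabla^E_{X^h}A_{\alpha}$ by $(\nabla^E_{X^h}A)_{\alpha}$, where $A$ is viewed as the bundle map $E'\to{\rm End}(E)$, $N\mapsto A^N$. The Leibniz rule gives $\nabla^E_{X^h}A_{\alpha}=(\nabla^E_{X^h}A)_{\alpha}+A^{\nabla^{E'}_{X^h}e_{\alpha}}$, so the two right--hand sides of \eqref{eq:differentialsigmau} differ by $\sum_{\alpha}{\rm tr}(A^{\nabla^{E'}_{X^h}e_{\alpha}}T_{\alpha_{\flat}(u)})$. Here I would invoke the defining feature of the horizontal distribution: the tautological frame $(e_{\alpha})$ is $\nabla^{E'}$--parallel in horizontal directions. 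Concretely, the section $e$ realizing $X^h$ satisfies $\nabla^{\bot}_Xe_{\alpha}(p)=0$ by the very definition of the horizontal lift, so by the pull--back identity $(\nabla^{E'}_{X^h}Y)\circ e=\nabla^{\bot}_X(Y\circ e)$ from Section~\ref{subsec:bundlecalculus} we get $\nabla^{E'}_{X^h}e_{\alpha}=0$ and the correction term vanishes, which is the second equality.

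The main obstacle is the reduction in the second paragraph: one must verify carefully that pointwise evaluation of $\sigma_u$ followed by differentiation along $c$ reproduces the abstract derivative of \eqref{eq:defTu} with the \emph{covariant} derivative $\nabla^E_{X^h}A_{\alpha}$ in place of $\frac{d}{dt}A_{\alpha}$. This rests entirely on the two conjugation--invariance facts above, both immediate from the characteristic--polynomial definition of $\sigma_u$; once they are in hand, the statement follows directly from \eqref{eq:defTu} and the horizontal--lift formulas recalled in Section~\ref{subsec:bundlecalculus}.
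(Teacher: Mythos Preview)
Your proposal is correct and follows essentially the same route as the paper's proof: both conjugate the family ${\bf A}$ by parallel transport along a curve in $P$ to reduce to operators on the fixed fibre $T_pL$, invoke the defining variational identity \eqref{eq:defTu} of the generalized Newton transformation, and then use horizontality ($\nabla^{E'}_{X^h}e_\alpha=0$, equivalently $\nabla^{\bot}_Xe_\alpha=0$) to identify $\nabla^E_{X^h}A_\alpha$ with $(\nabla^E_{X^h}A)_\alpha$. The only cosmetic differences are that the paper works with the horizontal lift $\gamma^h$ (horizontal for all $t$) rather than your curve $c=e\circ\gamma$ (horizontal only at $t=0$, which suffices for the derivative at $t=0$), and that the paper's computation of $\frac{d}{dt}\tilde A_\alpha$ lands directly on $(\nabla^\top_XA)_\alpha$ and then drops the $A^{\nabla^\bot_Xe_\alpha}$ term, whereas you first obtain $\nabla^E_{X^h}A_\alpha$ and then argue the correction vanishes; the two presentations are interchangeable.
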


Recall that the covariant derivative $(\nabla^E_{X^h}A)_{\alpha}$ is defined as
\begin{equation*}
(\nabla^E_{X^h}A)_{\alpha}Y=\nabla^E_{X^h}A_{\alpha}Y-A_{\alpha}(\nabla^E_{X^h}Y)-A^{\nabla^{E'}_{X^h}e_{\alpha}}Y,\quad Y\in \Gamma(E).
\end{equation*}
\begin{proof}
Fix a basis $(p,e)\in P$ and a vector $X\in T_pL$. Let $\gamma$ be a curve on $L$ such that $\gamma(0)=p$ and $\gamma '(0)=X$. Consider the horizontal lift $\gamma^h$ to $(p,e)$, i.e., $\gamma^h(0)=(p,e)$ and $(\gamma^h)'(0)=X^h_{(p,e)}$. Denote by $\tau_t$ the parallel displacement with respect to $\gamma$ from $p$ to $\gamma(t)$. Put
\begin{equation*}
\tilde{{\bf A}}(t)=\tau_t^{-1}\circ{\bf A}(\gamma^h(t))\circ\tau_t,\quad \textrm{and}\quad
\tilde{\sigma}_u(t)=\sigma_u(\tilde{\bf A}(t)).
\end{equation*}
Then $\tilde{\bf A}=(\tilde{A}_1,\ldots,\tilde{A}_q)$ is a one--parameter family of $q$ endomorphisms of $T_pL$. Notice that $\tilde{{\bf A}}(0)={\bf A}(p,e)$. Thus, by a definition of generalized Newton transformation, we get
\begin{equation*}
\frac{d}{dt}\tilde{\sigma}(t)_{t=0}=\sum_{\alpha}{\rm tr}\left( \frac{d}{dt}\tilde{A}_{\alpha}(t)_{t=0}\cdot T_{\alpha_{\flat}(u)}(p,e)\right).
\end{equation*}
Moreover,
\begin{equation*}
\frac{d}{dt}\tilde{\sigma}_u(t)_{t=0}=\frac{d}{dt}\sigma_u(\gamma^h(t))_{t=0}=(X^h\sigma_u)(p,e)
\end{equation*}
and for $Y\in T_pL$
\begin{align*}
\frac{d}{dt}\tilde{A}_{\alpha}(t)_{t=0}Y &=\frac{d}{dt}\left(\tau_t^{-1}A^{e_{\alpha}(\gamma^h(t))}\tau_t Y\right)_{t=0}=\nabla^{\top}_X A_{\alpha}Y-A^{\nabla^{\bot}_Xe_{\alpha}}Y-A_{\alpha}(\nabla^{\top}_XY)\\
&=(\nabla^{\top}_XA)_{\alpha}Y.
\end{align*}
Since we consider bases $e\in P$ along $\gamma^h$, is follows that each $e_{\alpha}$ is parallel along $X$, $\nabla^{\bot}_Xe_{\alpha}=0$. Now \eqref{eq:differentialsigmau} follows from relation $\nabla^{\top}_XA=(\nabla^E_{X^h}A)\circ e$ (see subsection \ref{subsec:bundlecalculus}).
\end{proof}

For any section $W\in\Gamma(E')$ denote by $W_{\alpha}$ a (smooth) function on $P$ of the form
\begin{equation*}
W_{\alpha}(p,e)=g(W,e_{\alpha})_p,\quad (p,e)\in P.
\end{equation*}
Notice that any vector field $X\in\Gamma(TL)$ induces a vector field $X\circ\pi_P\in\Gamma(E)$ and, hence, a function $(X\circ\pi_P)_{\alpha}$ denoted shortly by $X_{\alpha}$. Notice, moreover, that
\begin{equation}\label{eq:Hessalpha}
{\rm Hess}\, W_{\alpha}(X,Y)=((\nabla^{\bot})^2_{X,Y} W)_{\alpha},\quad W\in\Gamma(T^{\bot}L).
\end{equation}
In deed, fix a basis $(p,e)\in P$ and extend it locally to a parallel section $e\in\Gamma(P)$ at $(p,e)$. Then at $(p,e)$
\begin{align*}
{\rm Hess}\, W_{\alpha}(X,Y) &=g(\nabla^{\top}_X({\rm grad}_E\,W_{\alpha}\circ e),Y)\\
&=X g({\rm grad}_E\,W_{\alpha}\circ e,Y)-g({\rm grad}_E\,W_{\alpha}\circ e,\nabla^{\top}_XY)\\
&=XYg(W,e_{\alpha})-(\nabla^{\top}_XY)g(W,e_{\alpha})\\
&=g(\nabla^{\bot}_X(\nabla^{\bot}_YW),e_{\alpha})-g(\nabla^{\bot}_{\nabla^{\top}_XY}W,e_{\alpha})\\
&=g((\nabla^{,\bot})^2_{X,Y}W,e_{\alpha}).
\end{align*}

\begin{rem}
Notice that above relation can be proved more abstractly as follows. The bundle $T^{\bot}L$ is an associated bundle to the frame bundle $P$ with the fiber $\mathbb{R}^q$. There is one--to--one correspondence of the sections of this bundle with equivariant functions on $P$ with the values in the fiber $\mathbb{R}^q$. The equivariant function corresponding to the vector field $W$ equals $f_W(p,e)=(W_1,\ldots,W_q)=e^{-1}W$. Thus the $\alpha$--coordinate of $f_W$ equals $W_{\alpha}$. Since, $X^hf_W$ corresponds to the vector field $\nabla^{\bot}_XW$ it follows that $X^hW_{\alpha}$ corresponds to $(\nabla^{\bot}_XW)_{\alpha}$. In particular, applying this correspondence twice, we obtain the above equality.  
\end{rem}

\begin{defn}
Fix a multi--index $u\in\mathbb{N}^q$ and consider differential operators $L^{\ast}_u:\Gamma(E')\to C^{\infty}(P)$ and $L_u: C^{\infty}(P)\to\Gamma(E')$
\begin{align}
L^{\ast}_u(W) &=\sum_{\alpha}{\rm tr}({\rm Hess}\,W_{\alpha}\cdot T_{\alpha_{\flat}(u)}),\label{eq:diffopast}\\
L_u(f) &=\sum_{\alpha}{\rm tr}({\rm Hess}\,f\cdot T_{\alpha_{\flat}(u)})e_{\alpha}.\label{eq:diffop}
\end{align}
\end{defn}

Let us derive the formula for the value of $L^{\ast}_u$ on sections of the form $V\circ\pi_P$, where $V\in \Gamma(TL)$ and value of $L_u$ on functions of the form $f\circ\pi_P$, where $f\in C^{\infty}(L)$ and relations between them. We need some preliminary results being the adaptations of the results of Rosenberg \cite{Ros} to our setting.

Recall the Codazzi formula
\begin{equation}\label{eq:Codazzisingle}
(R(X,Y)N)^{\top}=(\nabla^{\top}_YA)^NX-(\nabla^{\top}_XA)^NY,\quad X,Y\in TL,\quad N\in  T^{\bot}L,
\end{equation}
where
\begin{equation*}
(\nabla^{\top}_XA)^NY=\nabla^{\top}_X(A^NY)-A^N(\nabla^{\top}_XY)-A^{\nabla^{\bot}_XN}Y.
\end{equation*}
We are ready to state and prove the main results of this section.

\begin{prop}\label{prop:Rosresult}
The following condition holds
\begin{equation*}
{\rm div}_E(T_u Y)=\sum_i g(\nabla^E_{(T_ue_i)^h}Y,e_i\circ\pi_P)+g({\rm div}_E T_u,Y),\quad Y\in\Gamma(E). 
\end{equation*}
\end{prop}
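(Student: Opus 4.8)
The statement is essentially a Leibniz-type (product) rule for the divergence operator ${\rm div}_E$ applied to the section $T_u Y$, which is the composition of the endomorphism field $T_u \in \Gamma({\rm End}(E))$ with the section $Y \in \Gamma(E)$. The plan is to expand the definition of ${\rm div}_E$ directly and then distribute the horizontal covariant derivative $\nabla^E_{e_i^h}$ across the product $T_u Y$ using the usual tensorial Leibniz rule for the connection on ${\rm End}(E)$.

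Let me sketch the computation. Let me plan to compute the proposal carefully.

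Let $(e_i)$ be a local orthonormal basis on $L$. By the definition of the divergence of a section of $E$ given in the frame-bundle calculus subsection,
\begin{equation*}
{\rm div}_E(T_u Y) = \sum_i g\!\left(\nabla^E_{e_i^h}(T_u Y),\, e_i\circ\pi_P\right).
\end{equation*}
First I would apply the Leibniz rule for the connection $\nabla^E$ acting on the product of the endomorphism $T_u$ with the section $Y$, namely
\begin{equation*}
\nabla^E_{e_i^h}(T_u Y) = (\nabla^E_{e_i^h} T_u)\, Y + T_u\,(\nabla^E_{e_i^h} Y),
\end{equation*}
so that
\begin{equation*}
{\rm div}_E(T_u Y) = \sum_i g\!\left((\nabla^E_{e_i^h} T_u)\, Y,\, e_i\circ\pi_P\right) + \sum_i g\!\left(T_u\,(\nabla^E_{e_i^h} Y),\, e_i\circ\pi_P\right).
\end{equation*}

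The key step is to recognize each of the two sums. For the first sum, I would use that $T_u$ is a symmetric (self-adjoint) endomorphism with respect to $g$ — each shape operator $A_\alpha$ is self-adjoint and the recursion \eqref{eq:recurencepropTu} then makes $T_u$ self-adjoint, and $\nabla^E$ is a metric connection so $\nabla^E_{e_i^h} T_u$ is self-adjoint as well. Hence $g((\nabla^E_{e_i^h} T_u) Y, e_i\circ\pi_P) = g(Y, (\nabla^E_{e_i^h} T_u)(e_i\circ\pi_P))$, and summing over $i$ gives exactly $\sum_i g(Y, (\nabla^E_{e_i^h} T_u)(e_i\circ\pi_P)) = g(Y, {\rm div}_E T_u) = g({\rm div}_E T_u, Y)$, matching the second term on the right-hand side of the claim. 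For the second sum, I would again use self-adjointness of $T_u$ to write $g(T_u(\nabla^E_{e_i^h} Y), e_i\circ\pi_P) = g(\nabla^E_{e_i^h} Y, T_u(e_i\circ\pi_P))$; since $(e_i)$ is orthonormal, $T_u(e_i\circ\pi_P)$ has components $\sum_j g(T_u e_i, e_j) e_j\circ\pi_P$, and reindexing (or using $g(\nabla^E_{e_i^h}Y, T_u(e_i\circ\pi_P)) = g(\nabla^E_{(T_u e_i)^h}Y, e_i\circ\pi_P)$ via linearity of the horizontal lift in its argument) yields the first term on the right-hand side.

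The main obstacle, and the step I would treat most carefully, is this last bookkeeping: showing $\sum_i g(\nabla^E_{e_i^h}Y, T_u(e_i\circ\pi_P)) = \sum_i g(\nabla^E_{(T_u e_i)^h}Y, e_i\circ\pi_P)$. This rests on the linearity of the horizontal lift $X \mapsto X^h$ together with the symmetry of $T_u$, which lets one move $T_u$ from the slot $e_i\circ\pi_P$ into the differentiation direction $e_i^h$. One must verify that $(T_u e_i)^h = \sum_j g(T_u e_i, e_j)\, e_j^h$ makes sense at the point $(p,e)\in P$, i.e. that $T_u$ at $(p,e)$ acts on $T_pL$ and its image is again lifted horizontally — this is legitimate because the horizontal lift is defined pointwise and linearly on $T_pL$, and $\nabla^E_{(\cdot)^h}Y$ is tensorial (hence $C^\infty(P)$-linear) in the lifted direction. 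Once these two identifications are in place, the two product terms assemble into exactly the claimed formula, so the proof reduces to the Leibniz rule plus two applications of the self-adjointness of $T_u$.
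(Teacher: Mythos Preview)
Your argument is correct, and it is genuinely different from the paper's proof. You prove the identity directly via the Leibniz rule $\nabla^E_{e_i^h}(T_u Y)=(\nabla^E_{e_i^h}T_u)Y+T_u(\nabla^E_{e_i^h}Y)$ and two applications of the self-adjointness of $T_u$ (and hence of $\nabla^E_{e_i^h}T_u$, since $\nabla^E$ is metric). The reindexing step you flag as ``the main obstacle'' is fine: at $(p,e)$ the horizontal lift is linear on $T_pL$ and $\nabla^E_{(\cdot)^h}Y$ is $C^\infty(P)$-linear in the direction, so $\sum_i g(\nabla^E_{e_i^h}Y,T_u e_i)=\sum_{i,j}g(T_u e_i,e_j)g(\nabla^E_{e_i^h}Y,e_j)$ equals $\sum_{i,j}g(T_u e_j,e_i)g(\nabla^E_{e_j^h}Y,e_i)=\sum_i g(\nabla^E_{(T_u e_i)^h}Y,e_i)$ by the symmetry of $g(T_u e_i,e_j)$.

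By contrast, the paper proves the formula by induction on $|u|$: it reduces to a parallel local section $e$ of $P$ and a vector $Y$ parallel at $p$, expands via the recursion $T_u=\sigma_u I-\sum_\alpha T_{\alpha_\flat(u)}A_\alpha$, and then invokes Proposition~\ref{prop:differentialsigmau}, the Codazzi equation~\eqref{eq:Codazzisingle}, and the recurrence~\eqref{eq:divTu} for ${\rm div}_E T_u$ to close the induction. Your route is shorter and uses none of this specific structure; in fact it shows the identity holds with $T_u$ replaced by \emph{any} self-adjoint section of ${\rm End}(E)$. The paper's route, on the other hand, ties the computation to the extrinsic data (shape operators, Codazzi, the divergence recursion), which is thematically consistent with how the proposition is used downstream, but is not needed for the bare statement.
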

\begin{proof}
Notice first that above formula is tensorial with respect to $Y\in TL$. Consider a parallel section $e$ of $P$ at $p\in L$. Then, above formula, at $(p,e)$ takes the form
\begin{equation}\label{eq:Rosformula}
\sum_i g(\nabla^{\top}_{e_i}(T_u\circ e)Y,e_i)=\sum_i g(\nabla^{\top}_{(T_u\circ e)e_i}Y,e_i)+g({\rm div}_ET_u,Y)\circ e.
\end{equation}
We will prove \eqref{eq:Rosformula} by induction on $|u|$. For $u=0$, it is clear since $T_0={\rm Id}$ and ${\rm div}_E T_0=0$. Fix $u$ and assume \eqref{eq:Rosformula} holds for all $v$ such that $|v|=|u|-1$. We may assume that $Y$ is parallel at fixed point $p$ with respect to $\nabla^{\top}$. Then, by the recurrence definition of $T_u$, Proposition \ref{prop:differentialsigmau}, Codazzi formula \eqref{eq:Codazzisingle} and inductive assumption we have
\begin{align*}
\sum_i g(\nabla^{\top}_{e_i}(T_u\circ e)Y,e_i) &=\sum_i g(\nabla^{\top}_{e_i}((\sigma_u\circ e) Y-\sum_{\alpha}(T_{\alpha_{\flat}(u)}\circ e)(A_{\alpha}\circ e)Y,e_i)\\
&=Y(\sigma_u\circ e)-\sum_{\alpha,i}g(\nabla^{\top}_{e_i}(T_{\alpha_{\flat}(u)}\circ e)(A_{\alpha}\circ e)Y,e_i)\\
&=Y^h\sigma_u\circ e-\sum_{\alpha,i}g(\nabla^{\top}_{(T_{\alpha_{\flat}(u)}\circ e)e_i}(A_{\alpha}\circ e)Y,e_i)\\
&-\sum_{\alpha} g({\rm div}T_{\alpha_{\flat}(u)},A_{\alpha}Y)\circ e\\
&=Y^h\sigma_u\circ e-\sum_{\alpha,i,j}g((T_{\alpha_{\flat}(u)}\circ e)e_i,e_j)g(\nabla^{\top}_{e_j}(A_{\alpha}\circ e)Y,e_i)\\
&-\sum_{\alpha} g(A_{\alpha}({\rm div}T_{\alpha_{\flat}(u)}),Y)\circ e\\
&=Y^h\sigma_u\circ e-\sum_{\alpha,i,j}g((T_{\alpha_{\flat}(u)}\circ e)e_i,e_j)g((\nabla^{\top}_{e_j}(A_{\alpha}\circ e))Y,e_i)\\
&-\sum_{\alpha} g(A_{\alpha}({\rm div}T_{\alpha_{\flat}(u)}),Y)\circ e\\
&=Y^h\sigma_u\circ e-\sum_{\alpha,i,j}g((T_{\alpha_{\flat}(u)}\circ e)e_i,e_j)g((\nabla^{\top}_Y(A_{\alpha}\circ e))e_j,e_i)\\
&+\sum_{\alpha,i,j}g((T_{\alpha_{\flat}(u)}\circ e)e_i,e_j)g(R(e_j,Y)e_{\alpha},e_i)\\
&-\sum_{\alpha} g(A_{\alpha}({\rm div}T_{\alpha_{\flat}(u)}),Y)\circ e\\
&=g({\rm div}_ET_u,Y)\circ e,
\end{align*}
which proves \eqref{eq:Rosformula}.
\end{proof}

\begin{prop}\label{prop:diffoperators}
Fix a multi--index $u$ a function $f\in C^{\infty}(L)$ and a vector field $V\in \Gamma(T^{\bot}L)$. Assume $V$ vanishes in some neighborhood of $\partial L$. Then, the following relation between differential operators $L_u$ and $L^{\ast}_u$ holds
\begin{align*}
\int_P (f\circ\pi_P)L_u^{\ast}(V\circ\pi_P)\,d{\rm vol}_P &=\int_P g(L_u(f\circ\pi_P),V\circ\pi_P)\,d{\rm vol}_P\\
&+\sum_{\alpha}\int_P g({\rm div}_E T_{\alpha_{\flat}(u)},V_{\alpha}{\rm grad} f\circ\pi_P)\,d{\rm vol}_P\\
&-\sum_{\alpha}\int_P g({\rm div}_E T_{\alpha_{\flat}(u)},(f\circ\pi_P){\rm grad}_E\, V_{\alpha})\,d{\rm vol}_P.
\end{align*}
In particular, taking $f=1$ we get
\begin{equation}\label{eq:Luastintegral}
\int_P L^{\ast}_u(V\circ\pi_P)\,d{\rm vol}_P=\sum_{\alpha}\int_P V_{\alpha}{\rm div}_E({\rm div}_E T_{\alpha_{\flat}(u)})\,d{\rm vol}_P.
\end{equation} 
\end{prop}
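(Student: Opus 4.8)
The plan is to prove the identity by two successive integrations by parts, which together exhibit $L_u$ and $L^{\ast}_u$ as formal adjoints up to the divergence terms carried by ${\rm div}_E T_{\alpha_{\flat}(u)}$. Write $W=V\circ\pi_P$, so that $W_{\alpha}=V_{\alpha}$, and abbreviate $T=T_{\alpha_{\flat}(u)}$. Throughout I would use three ingredients: Proposition \ref{prop:Rosresult}; the Leibniz rule ${\rm div}_E(hZ)=h\,{\rm div}_E Z+g({\rm grad}_E h,Z)$ for $h\in C^{\infty}(P)$, $Z\in\Gamma(E)$; and the divergence theorem on $P$, namely $\int_P {\rm div}_E Z\,d{\rm vol}_P=0$ whenever $Z$ vanishes near the fibers over $\partial L$. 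This last fact follows from $\widehat{X^h f}=X\hat f$: a parallel-frame computation gives $\widehat{{\rm div}_E Z}={\rm div}_L\hat Z$, so the integral reduces to $\int_L {\rm div}_L\hat Z\,d{\rm vol}_L=0$ by the classical divergence theorem. I would also record that each $T_u$ is self-adjoint, which follows by induction from \eqref{eq:recurencepropTu} together with the self-adjointness of the shape operators $A_{\alpha}$ and the equality of the two recursions there.

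For the first integration by parts, apply Proposition \ref{prop:Rosresult} with $Y={\rm grad}_E W_{\alpha}$ and the transformation $T$. By the definitions of ${\rm Hess}\,W_{\alpha}$ and of the trace, $\sum_i g(\nabla^E_{(Te_i)^h}{\rm grad}_E W_{\alpha},e_i\circ\pi_P)={\rm tr}({\rm Hess}\,W_{\alpha}\cdot T)$, so the proposition rearranges to
\begin{equation*}
{\rm tr}({\rm Hess}\,W_{\alpha}\cdot T)={\rm div}_E(T\,{\rm grad}_E W_{\alpha})-g({\rm div}_E T,{\rm grad}_E W_{\alpha}).
\end{equation*}
Multiplying by $f\circ\pi_P$, summing over $\alpha$, integrating over $P$, and using the Leibniz rule together with the divergence theorem (the relevant sections vanish near $\partial L$ because $V$ does), I obtain
\begin{equation*}
\int_P (f\circ\pi_P)L^{\ast}_u(W)=-\sum_{\alpha}\int_P g(({\rm grad}_L f)\circ\pi_P,T\,{\rm grad}_E W_{\alpha})-\sum_{\alpha}\int_P g({\rm div}_E T,(f\circ\pi_P){\rm grad}_E W_{\alpha}),
\end{equation*}
where ${\rm grad}_E(f\circ\pi_P)=({\rm grad}_L f)\circ\pi_P$ since $X^h(f\circ\pi_P)=Xf$. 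The second sum already matches the last term of the statement after substituting $W_{\alpha}=V_{\alpha}$.

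The second integration by parts handles the first sum. Using self-adjointness of $T$ I rewrite $g(({\rm grad}_L f)\circ\pi_P,T\,{\rm grad}_E W_{\alpha})=g(T({\rm grad}_L f)\circ\pi_P,{\rm grad}_E W_{\alpha})$, and then the Leibniz rule and the divergence theorem move the gradient off $W_{\alpha}$, giving $-\int_P g(T({\rm grad}_L f)\circ\pi_P,{\rm grad}_E W_{\alpha})=\int_P W_{\alpha}\,{\rm div}_E\bigl(T({\rm grad}_L f)\circ\pi_P\bigr)$. Applying Proposition \ref{prop:Rosresult} once more, now with $Y=({\rm grad}_L f)\circ\pi_P$, and recognising the resulting Hessian-trace term as in the first step,
\begin{equation*}
{\rm div}_E\bigl(T({\rm grad}_L f)\circ\pi_P\bigr)={\rm tr}({\rm Hess}(f\circ\pi_P)\cdot T)+g({\rm div}_E T,({\rm grad}_L f)\circ\pi_P).
\end{equation*}
Summing over $\alpha$ and invoking the definition \eqref{eq:diffop} of $L_u$, the first term assembles into $\int_P g(L_u(f\circ\pi_P),W)$, while the second produces $\sum_{\alpha}\int_P g({\rm div}_E T,V_{\alpha}({\rm grad}_L f)\circ\pi_P)$, the remaining term of the statement. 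Collecting the three contributions proves the displayed formula.

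Finally, for $f=1$ the function $f\circ\pi_P$ is constant, so $L_u(f\circ\pi_P)=0$ and the ${\rm grad}_L f$ term drops, leaving $\int_P L^{\ast}_u(V\circ\pi_P)=-\sum_{\alpha}\int_P g({\rm div}_E T,{\rm grad}_E V_{\alpha})$; one further integration by parts (the Leibniz rule plus the divergence theorem applied to $V_{\alpha}\,{\rm div}_E T$) turns this into \eqref{eq:Luastintegral}. The main obstacle is not any single estimate but the bookkeeping of the two integrations by parts: in particular, one must re-apply Proposition \ref{prop:Rosresult} in the second step and correctly identify the emerging second-order term with $L_u(f\circ\pi_P)$, which is exactly where the self-adjointness of $T_u$ and the recursion \eqref{eq:recurencepropTu} enter.
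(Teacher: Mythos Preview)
Your proof is correct and uses essentially the same ingredients as the paper: Proposition~\ref{prop:Rosresult} applied twice, the self-adjointness of $T_{\alpha_{\flat}(u)}$, the Leibniz rule for ${\rm div}_E$, and the divergence theorem on $P$. The only difference is organizational: the paper expands $\int_P g(L_u(f\circ\pi_P),V\circ\pi_P)$ and $\int_P (f\circ\pi_P)L^{\ast}_u(V\circ\pi_P)$ separately, each yielding a common cross term $\sum_\alpha\int_P (T_{\alpha_{\flat}(u)}{\rm grad}\,f)^h V_\alpha$ plus a ${\rm div}_E T_{\alpha_{\flat}(u)}$ contribution, and then matches the cross terms via symmetry of $T_{\alpha_{\flat}(u)}$; you instead chain the two integrations by parts, going directly from the $L^{\ast}_u$ side to the $L_u$ side through that same cross term.
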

\begin{proof}
First notice that
\begin{align*}
g(L_u(f\circ\pi_P),V\circ\pi_P) &=\sum_{\alpha} V_{\alpha}{\rm tr}({\rm Hess}\,(f\circ\pi_P)\cdot T_{\alpha_{\flat}(u)})\\
&=\sum_{\alpha}V_{\alpha}{\rm tr}(({\rm Hess}\,f) \circ\pi_P\cdot T_{\alpha_{\flat}(u)})\\
&=\sum_{\alpha} V_{\alpha} \sum_{i,j}g(\nabla^{\top}_{e_i}{\rm grad}\,f,e_j)g(T_{\alpha_{\flat}(u)}e_j,e_i)\\
&=\sum_{\alpha,j}V_{\alpha}g(\nabla^{\top}_{T_{\alpha_{\flat}(u)}e_j}{\rm grad}\,f,e_j).
\end{align*}
Hence, by Proposition \ref{prop:Rosresult} we get
\begin{align*}
g(L_u(f\circ\pi_P),V\circ\pi_P) &=\sum_{\alpha}V_{\alpha}{\rm div}_E(T_{\alpha_{\flat}(u)}{\rm grad}\,f)-\sum_{\alpha}V_{\alpha}g({\rm div}_E T_{\alpha_{\flat}(u)},{\rm grad}\,f)\\
&=\sum_{\alpha}{\rm div}(V_{\alpha}T_{\alpha_{\flat}(u)}{\rm grad}\,f)-(T_{\alpha_{\flat}(u)}{\rm grad}\,f)^hV_{\alpha}\\
&-\sum_{\alpha}V_{\alpha}g({\rm div}_E T_{\alpha_{\flat}(u)},{\rm grad}\,f).
\end{align*}
Computing the integral of both sides we get
\begin{multline}\label{eq:diffoperatirseq1}
\int_P g(L_u(f\circ\pi_P),V\circ\pi_P)\,d{\rm vol}_P\\
=-\sum_{\alpha}\int_P ((T_{\alpha_{\flat}(u)}{\rm grad}\,f)^hV_{\alpha}+V_{\alpha}g({\rm div}_E T_{\alpha_{\flat}(u)},{\rm grad}\,f))\,d{\rm vol}_P.
\end{multline}
On the other hand, again by Proposition \ref{prop:Rosresult},
\begin{align*}
(f\circ\pi_P)L^{\ast}_u(V\circ\pi_P) &=(f\circ\pi_P)\sum_{\alpha,i,j}g(\nabla^E_{e_i^h}{\rm grad}_E V_{\alpha},e_j)g(T_{\alpha_{\flat}(u)}e_j,e_i)\\
&=(f\circ\pi_P)\sum_{\alpha}{\rm div}_E(T_{\alpha_{\flat}(u)}{\rm grad}_E\,V_{\alpha})\\
&-(f\circ\pi_P)\sum_{\alpha}g({\rm div}_E T_{\alpha_{\flat}(u)},{\rm grad}\,V_{\alpha})\\
&=\sum_{\alpha}{\rm div}_E((f\circ\pi_P)T_{\alpha_{\flat}(u)}{\rm grad}_E\,V_{\alpha})-\sum_{\alpha}(T_{\alpha_{\flat}(u)}{\rm grad}_E\,V_{\alpha})f\\
&-(f\circ\pi_P)\sum_{\alpha}g({\rm div}_E T_{\alpha_{\flat}(u)},{\rm grad}\,V_{\alpha}).
\end{align*}
Computing the integral of both sides we get
\begin{multline}\label{eq:diffoperatorseq2}
\int_P (f\circ\pi_P)L^{\ast}_u(V\circ\pi_P)\,d{\rm vol}_P\\
=-\sum_{\alpha}\int_P ((T_{\alpha_{\flat}(u)}{\rm grad}_E\,V_{\alpha})f+(f\circ\pi_P)\sum_{\alpha}g({\rm div}_E T_{\alpha_{\flat}(u)},{\rm grad}\,V_{\alpha}))\,d{\rm vol}_P.
\end{multline}
Moreover, by the symmetry of generalized Newton transformation,
\begin{align*}
(T_{\alpha_{\flat}(u)}{\rm grad}_E\, V_{\alpha})f &=g(T_{\alpha_{\flat}(u)}{\rm grad}_E\,V_{\alpha},{\rm grad}\,f)\\
&=g({\rm grad}_E\,V_{\alpha},T_{\alpha_{\flat}(u)}{\rm grad}\,f)\\
&=(T_{\alpha_{\flat}(u)}{\rm grad}\,f)^hV_{\alpha}.
\end{align*}
This, together with \eqref{eq:diffoperatirseq1} and \eqref{eq:diffoperatorseq2} proves the first part of the proposition. The second part follows from the fact that ${\rm div}_E(fY)=f{\rm div}_E(Y)+Y^hf$ for $f\in C^{\infty}(L)$ and $Y\in\Gamma(E)$.
\end{proof}

\section{Generalized minimality}

\subsection{Definitions and basic results}
Let $(M,g)$ be a Riemannian manifold, $\varphi:L\to M$ an immersed submanifold (possibly with boundary $\partial L$). Adopt the notation from the previous sections. Fix a multi--index $u\in\mathbb{N}^q$. 

\begin{defn}
We say that $L$ is $u$--{\it minimal} if $L$ is a critical point of the variation of generalized extrinsic curvature $\hat{\sigma}_u$ of $L$. We assume that the boundary  $\partial L$ is strongly fixed, i.e., the variation field vanishes in some neighborhood of the boundary.
\end{defn}

Let us be more precise. Let $\Phi:L\times\mathbb{R}\to M$ be a variation of $L$, i.e., $\Phi(\cdot,0)=\varphi$. We will write $\Phi_t=\Phi(\cdot,t)$. We assume $\Phi_t$ is an isometric immersion for each $t$. Each submanifold $L_t$ induces generalized extrinsic curvature $\hat{\sigma}_u(t)$. Then, $L$ is $u$--minimal if $\frac{d}{dt}\hat{\sigma}_u(t)_{t=0}=0$ for any variation $\Phi$ of $L$ such that the variation field $V=\Phi_{\ast}(\frac{d}{dt})$ vanishes in some neighborhood of $\partial L$. Clearly, if $L$ is closed, there is no restriction on $V$.

In this section we will derive the formula for the variation field for $u$--minimality. We need to repeat, with some modifications, the bundle approach from the previous section. 

Let $\Phi:L\times\mathbb{R}\to M$ be a mentioned variation. Put $L_t=\Phi_t(L)$.  Denote by $\nabla$ the Levi--Civita connection on $M$. Consider now, the pull--back bundle $\Phi^{-1}TM$ over $L\times\mathbb{R}$, i.e. the fiber of this bundle is of the form $(\Phi^{-1}TM)_{(p,t)}=T_{\Phi(p,t)}M$. The decomposition
\begin{equation}\label{eq:TMsplitting2}
T_{\Phi(p,t)}M=T_{\Phi(p,t)}L_t\oplus T^{\bot}_{\Phi(p,t)}L_t,
\end{equation}
defines the splitting of $\Phi^{-1}TM$ into two bundles denoted by $\Phi^{-1}TL$ and $\Phi^{-1}T^{\bot}L$, respectively. Notice that $\Phi^{-1}TL$ and $\Phi^{-1}T^{\bot}L$ are not pull--back bundles. There is the unique connection $\nabla^{\Phi}$ in $\Phi^{-1}TM$ such that
\begin{equation*}
\nabla^{\Phi}_Z (Y\circ\Phi)=\nabla_{\Phi_{\ast}Z}Y,\quad Z\in T(L\times\mathbb{R}),\quad Y\in\Gamma(TM).
\end{equation*}
The splitting \eqref{eq:TMsplitting2} and connection $\nabla^{\Phi}$ induce connections $\nabla^{\Phi,\top}$ and $\nabla^{\Phi,\bot}$ in $\Phi^{-1}TL$ and $\Phi^{-1}T^{\bot}L$, respectively.

\subsection{Variation of generalized extrinsic curvatures}
Now we are ready to derive the formula for the variation of the generalized extrinsic curvatures. First, we need the formula for the derivative of coefficients of the shape operator. Let $V\in\Gamma(\Phi^{-1}TM)$ be the variation field, i.e. $V=\Phi_{\ast}\frac{d}{dt}$. Decompose $V$ into two components $V^{\top}\in\Gamma(\Phi^{-1}T^{\top}L)$ and $V^{\bot}\in\Gamma(\Phi^{-1}T^{\bot}L)$. 

Let $(e_{\alpha})$ and $(e_i)$ be sections of $\Phi^{-1}T^{\bot}L$ and $\Phi^{-1}TL$, respectively, of orthonormal vector fields. Put
\begin{equation}\label{eq:halphaij}
A^{e_{\alpha}}(e_i)=\sum_j h^{\alpha}_{ij}e_j.
\end{equation}
Then $h^{\alpha}_{ij}$ are functions on $L\times\mathbb{R}$. We have the following formula, which has been obtained by several authors \cite{Li,CL}.

\begin{prop}\label{prop:variationofA}
Fix a point $p\in L$. Choose an orthonormal basis $(e_{\alpha})$ of $T_p^{\bot}L$ and an orthonormal basis $(e_i)$ of $T_pL$. Extend these bases locally on $L\times\mathbb{R}$ such that at $(p,0)$
\begin{equation}\label{eq:suitablebases}
\nabla^{\Phi,\bot}_{\frac{d}{dt}}e_{\alpha}=0\quad\textrm{and}\quad\nabla^{\Phi,\top}_{\frac{d}{dt}}e_i=0\quad\textrm{for all $\alpha$ and $i$}.
\end{equation} 
Then at $(p,0)$
\begin{equation}\label{eq:variationofA}
\frac{d}{dt}h^{\alpha}_{ij}=-\sum_{\beta}(R_{\alpha\beta})_{ij}V_{\beta}+((\nabla^{\bot})^2_{e_i,e_j}V^{\bot})_{\alpha}+(A^{V^{\bot}}A_{\alpha})_{ij}+\left(\left(\nabla^{\top}_{V^{\top}}A\right)_{\alpha}\right)_{ij},
\end{equation}
where $X_{\alpha}$ is the $\alpha$--th coordinate of $X\in T^{\bot}_pL$ with respect to $(e_{\alpha})$ and $B_{ij}$ is the $ij$--th coefficient of the operator $B\in{\rm End}(T_pL)$ with respect to $(e_i)$ and $R_{\alpha\beta}$ is defined in \eqref{eq:Ralphabeta}.
\end{prop}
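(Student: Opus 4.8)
The plan is to compute $\frac{d}{dt}h^\alpha_{ij}$ directly, working throughout in the pull--back bundle $\Phi^{-1}TM$ with its connection $\nabla^\Phi$, and to exploit the two normalizations \eqref{eq:suitablebases} to suppress the frame contributions. To keep the differentiation of the ``direction'' $e_i$ under control I would first fix coordinate vector fields $\partial_a$ on $L$ that are $g_0$--normal and orthonormal at $p$, so that $\Phi_a:=\Phi_\ast\partial_a$ equals $e_i$ at $(p,0)$ and the tangential part of $\nabla^\Phi_{\partial_a}\Phi_b$ vanishes there, and work with $h^\alpha_{ab}=g(\nabla^\Phi_{\partial_a}\Phi_b,e_\alpha)$; since $[\frac{d}{dt},\partial_a]=0$ this form is convenient to differentiate. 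Differentiating, the term carrying $\nabla^\Phi_{\frac{d}{dt}}e_\alpha$ pairs a (tangential) vector with $\nabla^\Phi_{\partial_a}\Phi_b$, whose tangential part vanishes at $(p,0)$, while its normal part is killed by $\nabla^{\Phi,\bot}_{\frac{d}{dt}}e_\alpha=0$; hence at $(p,0)$ one is left with $\frac{d}{dt}h^\alpha_{ab}=g(\nabla^\Phi_{\frac{d}{dt}}\nabla^\Phi_{\partial_a}\Phi_b,e_\alpha)$.

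The heart of the computation is the commutation of the time and space derivatives. Because $[\frac{d}{dt},\partial_a]=0$ and $\nabla^\Phi$ is torsion--free over the map, one has the symmetry $\nabla^\Phi_{\frac{d}{dt}}\Phi_b=\nabla^\Phi_{\partial_b}V$, and the curvature identity gives
\[
\nabla^\Phi_{\frac{d}{dt}}\nabla^\Phi_{\partial_a}\Phi_b=R(V,\Phi_a)\Phi_b+\nabla^\Phi_{\partial_a}\nabla^\Phi_{\partial_b}V.
\]
Projecting onto $e_\alpha$ produces an ambient curvature term $g(R(V,\Phi_a)\Phi_b,e_\alpha)$ together with the normal part of the second covariant derivative of $V$. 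I would then decompose $V=V^\top+V^\bot$: for $V^\bot$, the Gauss and Weingarten formulas split $\nabla^\Phi_{\partial_a}\nabla^\Phi_{\partial_b}V^\bot$ into the normal Hessian $(\nabla^{\bot})^2_{e_i,e_j}V^\bot$ (the normal--normal part, cf.\ \eqref{eq:Hessalpha}) and a shape--operator contribution coming from the Weingarten term $-A^{V^\bot}\Phi_b$.

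It then remains to convert these coordinate expressions into the orthonormal--frame coefficients of \eqref{eq:variationofA} and to name each structural piece. Passing from $(\Phi_a)$ to the orthonormal tangent frame $(e_i)$ produces a correction governed by the $t$--derivative of the orthonormalizing matrix, which by the gauge $\nabla^{\Phi,\top}_{\frac{d}{dt}}e_i=0$ is pinned down through the tangential part of $\nabla^\Phi_{\partial_a}V$, i.e.\ again through $A^{V^\bot}$; combined with the Weingarten contribution above this assembles $(A^{V^\bot}A_\alpha)_{ij}$. For the curvature, the first Bianchi identity together with the symmetries of $R$ (and the Ricci equation, which exchanges ambient against normal curvature modulo a commutator of shape operators) rewrite $g(R(V^\bot,\cdot)\cdot,e_\alpha)$ as $-\sum_\beta(R_{\alpha\beta})_{ij}V_\beta$ with $R_{\alpha\beta}$ as in \eqref{eq:Ralphabeta}. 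Finally, for the tangential part $V^\top$ the variation is an infinitesimal reparametrisation, and the Codazzi formula \eqref{eq:Codazzisingle} converts the arising derivative of $A$ into $((\nabla^\top_{V^\top}A)_\alpha)_{ij}$, its curvature remainder merging with the $V^\top$--piece of $g(R(V,\Phi_a)\Phi_b,e_\alpha)$.

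The step I expect to be the main obstacle is precisely this last reorganisation: keeping the tangential/normal projections and all signs consistent while simultaneously invoking Gauss, Weingarten, Codazzi and Ricci, and correctly accounting for the first--order motion of the tangential orthonormal frame that is hidden in \eqref{eq:suitablebases}. A useful internal check to carry throughout is that $h^\alpha_{ij}$ is symmetric in $i,j$, so the antisymmetric parts of the curvature term, of the normal Hessian and of the shape--operator term must cancel by the Ricci equation; this both guards against sign errors and confirms that the four summands in \eqref{eq:variationofA} are the correct invariant pieces.
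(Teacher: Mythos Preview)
The paper does not supply its own proof of this proposition: it simply states the formula and attributes it to \cite{Li,CL}. Your outline is precisely the standard argument one finds in those references (and, in the hypersurface case, going back to Simons and Reilly): differentiate $g(\nabla^{\Phi}_{\partial_a}\Phi_b,e_\alpha)$, commute $\frac{d}{dt}$ with $\partial_a$ via the curvature identity for the pull--back connection, split $V=V^\top+V^\bot$, and then unpack with Gauss--Weingarten and Codazzi. So in spirit you are doing exactly what the paper invokes by citation.

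Two remarks on the execution. First, your handling of the passage from the coordinate frame $(\Phi_a)$ to the orthonormal frame $(e_i)$ is correct, and the gauge $\nabla^{\Phi,\top}_{\frac{d}{dt}}e_i=0$ does pin down $\frac{d}{dt}M$ in terms of $(\nabla^{\Phi}_{\partial_a}V)^\top=-A^{V^\bot}e_a+\nabla^\top_{e_a}V^\top$; the $A^{V^\bot}$--piece of this frame correction, together with the Weingarten contribution from $\nabla^{\Phi}_{\partial_a}\nabla^{\Phi}_{\partial_b}V^\bot$, indeed assembles $(A^{V^\bot}A_\alpha)_{ij}$, while the $\nabla^\top V^\top$--piece is absorbed into $(\nabla^\top_{V^\top}A)_\alpha$ after Codazzi. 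Second, for the curvature term a direct application of the pair symmetry of $R$ gives $g(R(V^\bot,e_i)e_j,e_\alpha)=-\sum_\beta V_\beta(R_{\alpha\beta})_{ji}$, i.e.\ with $i,j$ transposed relative to \eqref{eq:variationofA}; the discrepancy is exactly the antisymmetric part, and it is cancelled by the antisymmetric part of $((\nabla^\bot)^2_{e_i,e_j}V^\bot)_\alpha$ via the Ricci equation, consistent with the Remark following the proposition. Your proposed symmetry check in $i,j$ is precisely the mechanism that certifies this cancellation.
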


\begin{rem}
Notice, that in \eqref{eq:variationofA} not every term is symmetric in $i,j$. In fact, the last two terms and the sum of the first and second are.
\end{rem}

Now we pass to the variation of $\hat{\sigma}_u$. Let $P$ be the bundle over $L\times\mathbb{R}$ of orthonormal frames of $\Phi^{-1}T^{\bot}L$. Each element of $P$ is the triple $(p,t,e)$, where $(p,t)\in L\times \mathbb{R}$ and $e$ is an orthonormal basis of $ \Phi^{-1}T^{\bot}L$. Moreover, let $E$ be the bundle over $P$ with the fiber $E_{(p,t,e)}=(\Phi^{-1}TL)_{(p,t)}$. Notice that $E$ is the pull--back bundle with respect to $\pi_P$ of the bundle $\Phi^{-1}TL$. We define, analogously as in the previous sections, the bundle $E'$ over $P$ with the fibers of $\Phi^{-1}T^{\bot}L$. As vector spaces
\begin{equation*}
E_{(p,t,e)}\oplus E'_{(p,t,e)}=(\Phi^{-1}TM)_{(p,t,e)}=T_pM.
\end{equation*}
The shape operator of $L$ induces the family of operators $A_{\alpha}$ as follows
\begin{equation*}
A_{\alpha}(p,t,e)=A^{e_\alpha}_{(p,t)}.
\end{equation*} 
Thus $A_{\alpha}\in \Gamma({\rm End}(E))$. Moreover, the symmetric function $\sigma_u$ of the system ${\bf A}=(A_1,\ldots,A_q)$ is a function on $P$,
\begin{equation*}
\sigma_u(p,t,e)=\sigma_u(A^{e_1}_{(p,t)},\ldots, A^{e_q}_{(p,t)})
\end{equation*} 
and the generalized Newton transformation $T_u$ is the section of ${\rm End}(E)$. 

The following Proposition is crucial the study of $u$--minimality.

\begin{prop}\label{prop:variationsigmau}
The horizontal lift of $\frac{d}{dt}$ to the bundle $P$ evaluated on a generalized symmetric function $\sigma_u$ at $t=0$ equals
\begin{equation}\label{eq:variationsigmau}
\left(\frac{d}{dt}\right)^h\sigma_u=g({\bf R}_u-{\bf S}_u,V)+\sigma_ug(H_L,V)\circ\pi_P
+(V^{\top})^h\sigma_u+L^{\ast}_u(V^{\bot}\circ\pi_P),
\end{equation}
where $H_L$ is the mean curvature of $L$.
\end{prop}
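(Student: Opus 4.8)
The plan is to adapt the parallel--transport argument from the proof of Proposition~\ref{prop:differentialsigmau}, differentiating now along the $\frac{d}{dt}$--direction of the variation instead of along a horizontal tangent direction over a fixed $L$. Fix $p\in L$ and choose the orthonormal bases $(e_i)$, $(e_\alpha)$ extended to satisfy \eqref{eq:suitablebases} at $(p,0)$; this normalization is precisely what makes the curve $t\mapsto(p,t,e)$ the integral curve of the horizontal lift $\left(\frac{d}{dt}\right)^h$ through $(p,0,e)$. Conjugating the time--dependent family of shape operators by parallel displacement along $\frac{d}{dt}$ to a fixed vector space and applying the defining identity \eqref{eq:defTu} of the generalized Newton transformation, I would obtain
\begin{equation*}
\left(\frac{d}{dt}\right)^h\sigma_u=\sum_{\alpha}{\rm tr}\left(\left[\tfrac{d}{dt}h^{\alpha}_{ij}\right]_{ij}T_{\alpha_{\flat}(u)}\right),
\end{equation*}
where $[\,\cdot\,]_{ij}$ denotes the matrix with the indicated entries in the frame $(e_i)$ and $\frac{d}{dt}h^{\alpha}_{ij}$ is the quantity computed in Proposition~\ref{prop:variationofA}; the components of the covariant derivative of $A_\alpha$ along $\frac{d}{dt}$ coincide with $\frac{d}{dt}h^{\alpha}_{ij}$ exactly because of \eqref{eq:suitablebases}.

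Next I would substitute \eqref{eq:variationofA} and split the right--hand side into its four summands. The curvature term $-\sum_{\beta}(R_{\alpha\beta})_{ij}V_{\beta}$ contributes $-\sum_{\alpha,\beta}{\rm tr}(R_{\alpha\beta}T_{\alpha_{\flat}(u)})V_{\beta}=g({\bf R}_u,V)$ by the definition of ${\bf R}_u$ in \eqref{eq:boldHuandSu}. The second--order normal term equals ${\rm Hess}\,(V^{\bot})_{\alpha}(e_i,e_j)$ by \eqref{eq:Hessalpha}, and since $(V^{\bot})_{\alpha}=V_{\alpha}$, summing its trace against $T_{\alpha_{\flat}(u)}$ reproduces $L^{\ast}_u(V^{\bot}\circ\pi_P)$ by \eqref{eq:diffopast}, the symmetry of $T_{\alpha_{\flat}(u)}$ absorbing any non--symmetry of the Hessian. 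The tangential term $\left(\left(\nabla^{\top}_{V^{\top}}A\right)_{\alpha}\right)_{ij}$, traced against $T_{\alpha_{\flat}(u)}$ and summed over $\alpha$, is exactly $(V^{\top})^h\sigma_u$ by Proposition~\ref{prop:differentialsigmau} applied with $X=V^{\top}$.

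The remaining summand, $\sum_{\alpha}{\rm tr}(A^{V^{\bot}}A_{\alpha}T_{\alpha_{\flat}(u)})$, is the computational crux. Writing $A^{V^{\bot}}=\sum_{\beta}V_{\beta}A_{\beta}$ turns it into $\sum_{\alpha,\beta}V_{\beta}{\rm tr}(A_{\beta}A_{\alpha}T_{\alpha_{\flat}(u)})$. I would collapse the inner sum using the recursion \eqref{eq:recurencepropTu}, which gives ${\rm tr}(A_{\beta}T_u)=\sigma_u\,{\rm tr}\,A_{\beta}-\sum_{\alpha}{\rm tr}(A_{\beta}A_{\alpha}T_{\alpha_{\flat}(u)})$, hence $\sum_{\alpha}{\rm tr}(A_{\beta}A_{\alpha}T_{\alpha_{\flat}(u)})=\sigma_u\,{\rm tr}\,A_{\beta}-{\rm tr}(A_{\beta}T_u)$. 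Since ${\rm tr}\,A_{\beta}=g(H_L,e_{\beta})$ and ${\rm tr}(A_{\beta}T_u)$ is the $\beta$--coefficient of ${\bf S}_u$, multiplying by $V_{\beta}$ and summing yields $\sigma_u\,g(H_L,V)-g({\bf S}_u,V)$, using that $H_L,{\bf S}_u\in\Gamma(E')$ so that only $V^{\bot}$ enters.

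Adding the four contributions gives \eqref{eq:variationsigmau}. The main obstacle is the first step: one must carefully justify that $\left(\frac{d}{dt}\right)^h$ acts on $\sigma_u$ through \eqref{eq:defTu}, which requires transporting the $t$--dependent family ${\bf A}(p,t,e)$ to a single fixed vector space so that the defining identity of the generalized Newton transformation applies, and verifying that the resulting matrix derivative is precisely $\left[\frac{d}{dt}h^{\alpha}_{ij}\right]_{ij}$ under the normalization \eqref{eq:suitablebases}. Once this bookkeeping is settled, the rest reduces to the algebraic identifications above.
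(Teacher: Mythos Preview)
Your proposal is correct and follows essentially the same route as the paper: both reduce $\left(\tfrac{d}{dt}\right)^h\sigma_u$ to $\sum_\alpha{\rm tr}\bigl(\tfrac{d}{dt}h^{\alpha}_{ij}\cdot T_{\alpha_\flat(u)}\bigr)$ via the parallel--transport / defining identity of $T_u$ under the normalization \eqref{eq:suitablebases}, substitute \eqref{eq:variationofA}, and identify the four resulting pieces exactly as you do, with the $A^{V^\bot}A_\alpha T_{\alpha_\flat(u)}$ term collapsed via the recursion \eqref{eq:recurencepropTu}. The only cosmetic difference is that the paper invokes Proposition~\ref{prop:differentialsigmau} directly for the first step rather than rerunning the parallel--transport argument, and it applies the recursion to $A^{V^\bot}$ as a single operator rather than first expanding $A^{V^\bot}=\sum_\beta V_\beta A_\beta$.
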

\begin{proof} 
By \eqref{eq:differentialsigmau} we have
\begin{equation*}
\left(\frac{d}{dt}\right)^h\sigma_u=\sum_{\alpha}{\rm tr}
\left(\left(\nabla^E_{\left(\frac{d}{dt}\right)^h}A\right)_{\alpha}\cdot T_{\alpha^{\flat}(u)}\right).
\end{equation*}
Fix a point $p\in L$. Choose an orthonormal basis $(e_i)$ of $T_pL$ and extend locally such that $\nabla^{\Phi,\top}e_i=0$ for all $i$ at $p$. Fix a basis $(p,e)\in P$. We wish to compute $\frac{d}{dt}\sigma_u(\gamma^h_{(p,e)}(t))$, where $\gamma^h_{(p,e)}$ is a horizontal lift of the integral curve $\gamma$ of $\frac{d}{dt}$. Thus along $\gamma$, $\nabla^{\Phi,\bot}_{\frac{d}{dt}}e_{\alpha}=0$ for all $\alpha$. Therefore assumption \eqref{eq:suitablebases} of Proposition \ref{prop:variationofA} is satisfied. Then (see subsection \ref{subsec:bundlecalculus})
\begin{equation*}
\left(\left(\nabla^E_{\left(\frac{d}{dt}\right)^h}A\right)_{\alpha}\right)_{ij}=\frac{d}{dt}h^{\alpha}_{ij}.
\end{equation*}
Thus by Proposition \ref{prop:variationofA} and \eqref{eq:Hessalpha} we get
\begin{align*}
\left(\frac{d}{dt}\right)^h \sigma_u &
=\sum_{\alpha,i,j}\frac{d}{dt}h^{\alpha}_{ij}\cdot(T_{\alpha_\flat(u)})_{ji}\\
&=-\sum_{\alpha,\beta}{\rm tr}(R_{\alpha\beta}T_{\alpha_{\flat}(u)})V_{\beta}+L_u^{\ast}(V^{\bot}\circ\pi_P)\\
&+\sum_{\alpha}{\rm tr}(A^{V_{\bot}}A_{\alpha}T_{\alpha_{\flat}(u)})+\sum_{\alpha}{\rm tr}(\nabla^{\top}_{V^{\top}}A_{\alpha}\cdot T_{\alpha_{\flat}(u)}).
\end{align*}
Since (see subsection \ref{subsec:bundlecalculus} and \eqref{eq:differentialsigmau})
\begin{equation*}
\sum_{\alpha}{\rm tr}\left((\nabla_{V^{\top}}A)^{e_{\alpha}}\cdot T_{\alpha_{\flat}(u)}\right)
=\sum_{\alpha}{\rm tr}\left( \left(\nabla^E_{(V^{\top})^h}A\right)_{\alpha}\cdot T_{\alpha_{\flat}(u)}\right)=(V^{\top})^h\sigma_u
\end{equation*}
and by Proposition \ref{prop:propTu},
\begin{equation*}
\sum_{\alpha}{\rm tr}(A^{V^{\bot}}A_{\alpha}T_{\alpha_{\flat}(u)})
={\rm tr}(A^{V^{\bot}}(\sigma_uI-T_u))=\sigma_u g(H_L,V)-{\rm tr}(A^{V^{\bot}}T_u),
\end{equation*}
we obtain \eqref{eq:variationsigmau}.
\end{proof}

Let us turn to the main considerations of the article, namely, let us compute the variation of $\sigma_u$. Put $P_t=\pi_{P}^{-1}(\cdot,t)$. Then, differentiating under the integral sign (compare subsection \ref{subsec:bundlecalculus}) at $t=0$
\begin{align*}
\frac{d}{dt}\int_{P_t}\sigma_u\,d{\rm vol}_{P_t}
&=\frac{d}{dt}\int_{L_t}\int_{P_{(p,t)}}\sigma_u\,d{\rm vol}_{P_{(p,t)}}\,d{\rm vol}_{L_t}\\
&=\int_L\int_{P_{(p,0)}}\left(\frac{d}{dt}\right)^h\sigma_u\,d{\rm vol}_{P_{(p,0)}}\,d{\rm vol}_L\\
&+\int_L\int_{P_{(p,0)}}\sigma_u\,d{\rm vol}_{P_{(p,0)}}\,d\frac{d}{dt}{\rm vol}_{L_t}. 
\end{align*}
Since
\begin{equation*}
\frac{d}{dt}{\rm vol}_{L_t}=({\rm div}V^{\top}-g(H_L,V)){\rm vol}_L,
\end{equation*}
by the fact that $V^{\top}$ vanishes in some neighborhood of $\partial L$, for any smooth function $f$ we have
\begin{equation*}
\int_L f\,d\frac{d}{dt}{\rm vol}_{L_t}=\int_L (f{\rm div}V^{\top}-fg(H_L,V))\,d{\rm vol}_L=\int_L (-V^{\top}f-fg(H_L,V))\,d{\rm vol}_L.
\end{equation*}
Thus by Proposition \ref{prop:variationsigmau}, \eqref{eq:Luastintegral} and again subsection \ref{subsec:bundlecalculus}
\begin{align*}
\frac{d}{dt}\int_{P_t}\sigma_u\,d{\rm vol}_{P_t} &
=\int_L\int_{P_{(p,0)}} (g({\bf R}_u-{\bf S_u},V)\,d{\rm vol}_{P_{(p,0)}}\,d{\rm vol}_L \\
&+\int_L\int_{P_{(p,0)}}\sigma_ug(H_L,V)\circ\pi_P\,d{\rm vol}_{P_{(p,0)}}\,d{\rm vol}_L\\
&+\int_L\int_{P_{(p,0)}}(V^{\top})^h\sigma_u\,d{\rm vol}_{P_{(p,0)}}\,d{\rm vol}_L+\int_{P_0} L^{\ast}_u(V^{\bot}\circ\pi_P)\,d{\rm vol}_{P_0}\\
&-\int_L V^{\top}\int_{P_{(p,0)}}\sigma_u\,d{\rm vol}_{P_{(p,0)}}\,d{\rm vol}_L-\int_L g(H_L,V)\int_{P_{(p,0)}}\sigma_u\,d{\rm vol}_{P_{(p,0)}}\,d{\rm vol}_L\\
&=\int_L\int_{P_{(p,0)}} g({\bf R}_u-{\bf S}_u,V)\,d{\rm vol}_{P_{(p,0)}}\,d{\rm vol}_L\\
&+\int_{P_0}\sum_{\alpha}V_{\alpha}{\rm div}_E({\rm div}_E T_{\alpha_{\flat}(u)})\,d{\rm vol}_{P_0}\\
&=\int_L\int_{P_{(p,0)}}g({\bf R}_u-{\bf S}_u+{\bf W}_u,V)\,d{\rm vol}_{P_{(p,0)}}\,d{\rm vol}_L.
\end{align*}
By the definitions of $R_u$, $S_u$, $W_u$ and since $P_0$ is just the bundle $P$ we finally get
\begin{equation}\label{eq:sigmauvariation}
\frac{d}{dt}\left(\int_{P_t}\sigma_u\,d{\rm vol}_{P_t}\right)_{t=0}
=\int_L g(R_u-S_u+W_u,V)\,d{\rm vol}_L.
\end{equation}

\begin{thm}\label{thm:uminimality}
A submanifold $L$ is $u$--minimal if and only if
\begin{equation*}
R_u+W_u=S_u.
\end{equation*}
In particular, assuming $M$ is of constant sectional curvature $c$, then $L$ is $u$--minimal if and only if
\begin{equation}\label{eq:uminimalityconstant}
c(n+1-|u|)H_u=S_u,\quad n=\dim L.
\end{equation}
\end{thm}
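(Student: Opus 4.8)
The plan is to derive the theorem directly from the variation formula \eqref{eq:sigmauvariation}, which was established just before the statement. That formula reads
\begin{equation*}
\frac{d}{dt}\left(\int_{P_t}\sigma_u\,d{\rm vol}_{P_t}\right)_{t=0}=\int_L g(R_u-S_u+W_u,V)\,d{\rm vol}_L,
\end{equation*}
where $V$ is the variation field vanishing near $\partial L$. By definition, $L$ is $u$--minimal precisely when this derivative vanishes for every admissible $V$. Since $R_u,S_u,W_u\in\Gamma(T^{\bot}L)$ are fixed sections independent of $V$, and since $V^{\bot}$ may be chosen to be an arbitrary normal field supported in the interior of $L$, the integral vanishes for all such $V$ if and only if the normal section $R_u-S_u+W_u$ vanishes identically. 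This is a standard fundamental--lemma--of--the--calculus--of--variations argument; the only point to verify is that the tangential part $V^{\top}$ contributes nothing, which is already encoded in \eqref{eq:sigmauvariation} since the right--hand side pairs $V$ only against normal sections (the tangential contributions having been absorbed into the boundary terms that vanish). This yields the first equivalence $R_u+W_u=S_u$.

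\textbf{The second part} specializes to constant sectional curvature $c$. Here I would use two facts. First, in a space form the curvature tensor is $R(X,Y)Z=c\,(g(Y,Z)X-g(X,Z)Y)$, so the ambient manifold is in particular locally symmetric and the divergence recurrence \eqref{eq:divTu} simplifies: the curvature correction term $\sum_{\alpha,i}(R(e_{\alpha},T_{\alpha_{\flat}(u)}e_i)e_i)^{\top}$ becomes tangential--valued and, by the explicit form of $R$, reduces to a multiple of $T_{\alpha_{\flat}(u)}$ applied to a normal--indexed expression that vanishes upon taking the tangential projection (since $e_{\alpha}\perp T_pL$). Consequently ${\rm div}_E T_u=0$ for all $u$ by induction on $|u|$, whence $W_u=0$. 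Second, I would compute $R_u$ explicitly: substituting the space--form curvature into $R_{\alpha\beta}(X,Y)=g(R(e_{\alpha},X)e_{\beta},Y)=c(g(X,e_{\beta})g(e_{\alpha},Y)-g(e_{\alpha},e_{\beta})g(X,Y))$. Since $e_{\alpha},e_{\beta}$ are normal and $X,Y$ tangent, the first term drops and one is left with $R_{\alpha\beta}(X,Y)=-c\,\delta_{\alpha\beta}\,g(X,Y)$, i.e. $R_{\alpha\beta}=-c\,\delta_{\alpha\beta}\,{\rm Id}$ as a tangential bilinear form.

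\textbf{Feeding this into} the definition of ${\bf R}_u$ in \eqref{eq:boldHuandSu} gives
\begin{equation*}
{\bf R}_u=-\sum_{\alpha,\beta}{\rm tr}(R_{\alpha\beta}T_{\alpha_{\flat}(u)})e_{\beta}=c\sum_{\alpha}{\rm tr}(T_{\alpha_{\flat}(u)})e_{\alpha},
\end{equation*}
and then I would invoke the trace identity ${\rm tr}\,T_v=(n-|v|)\sigma_v$ from Proposition \ref{prop:propTu}. Since $|\alpha_{\flat}(u)|=|u|-1$, this yields ${\rm tr}(T_{\alpha_{\flat}(u)})=(n-|u|+1)\sigma_{\alpha_{\flat}(u)}$, so that ${\bf R}_u=c(n+1-|u|)\sum_{\alpha}\sigma_{\alpha_{\flat}(u)}e_{\alpha}=c(n+1-|u|){\bf H}_u$ by the definition of ${\bf H}_u$. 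Averaging over the fibers of $P$ then gives $R_u=c(n+1-|u|)H_u$, and combining with $W_u=0$ turns the general criterion $R_u+W_u=S_u$ into the stated condition $c(n+1-|u|)H_u=S_u$.

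\textbf{The main obstacle} I anticipate is the careful verification that ${\rm div}_E T_u=0$ in the constant--curvature case: one must check that the curvature term in the recurrence \eqref{eq:divTu} genuinely vanishes after tangential projection rather than merely simplifying, and that the inductive hypothesis propagates correctly through the operator $A_{\alpha}$ acting on ${\rm div}_E T_{\alpha_{\flat}(u)}$. The computation of $R_{\alpha\beta}$ and the application of the trace identity are routine once the space--form curvature is inserted, so the only genuinely delicate step is confirming the vanishing of $W_u$; everything else is bookkeeping with the definitions in \eqref{eq:boldHuandSu} and Proposition \ref{prop:propTu}.
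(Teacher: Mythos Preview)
Your proposal is correct and follows essentially the same approach as the paper: the first part is deduced from the variation formula \eqref{eq:sigmauvariation} via the fundamental lemma of the calculus of variations, and the second part comes from observing that in a space form ${\rm div}_E T_v=0$ (so $W_u=0$) and computing ${\bf R}_u=c(n+1-|u|){\bf H}_u$ via $R_{\alpha\beta}=-c\,\delta_{\alpha\beta}\,{\rm Id}$ and the trace identity from Proposition~\ref{prop:propTu}. Your anticipated ``obstacle'' is not really one: in the recurrence \eqref{eq:divTu} the curvature term $R(e_{\alpha},T_{\alpha_{\flat}(u)}e_i)e_i=c\,g(T_{\alpha_{\flat}(u)}e_i,e_i)\,e_{\alpha}$ is purely normal, so its tangential projection vanishes outright and the induction goes through immediately.
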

\begin{proof}
The first part follows directly by above consideration. For the second part, it is easy to see by \eqref{eq:divTu} that ${\rm div}_E T_v=0$ for all choices of $v$. Thus, by the first part, $L$  is $u$--minimal if and only if $R_u=S_u$. It suffices to notice that for $M$ of constant sectional curvature $c$, by Proposition \ref{prop:propTu},
\begin{equation*}
{\bf R}_u=\sum_{\alpha,\beta}{\rm tr}( c\delta_{\alpha\beta}I\cdot T_{\alpha_{\flat}(u)})e_{\beta}=c\sum_{\alpha}{\rm tr}(T_{\alpha_{\flat}(u)})e_{\alpha}=c(n+1-|u|){\bf H}_u.\qedhere
\end{equation*}
\end{proof}

\section{Examples of $u$--minimal submanifolds}\label{sec:Examples}

In this section we give examples of $u$--minimal submanifolds in the space forms for certain choices of multi--index $u$. Adopt the notation from the Section \ref{sec:Submgeom}.

\begin{exa}\label{ex:1}
Totally geodesic submanifolds are clearly $u$--minimal for all possible choices of $u\in\mathbb{N}^q$.
\end{exa}

\begin{exa}\label{ex:2}
Let $u=0=(0,\ldots,0)$. Then $\alpha_{\flat}(u)=0$, so $W_u$ vanishes. Further, we have
\begin{equation*}
{\bf S}_0=\sum_{\alpha}{\rm tr}(A_{\alpha})e_{\alpha}=H\circ \pi_P
\end{equation*}
and, clearly ${\bf R}_u=0$. Thus $0$--minimality is equivalent to classical notion of minimality of a submanifold.
\end{exa}

\begin{exa}\label{ex:3}
Let $L$ be a totally umbilical codimension $q$ submanifold in the space form $M^{n+q}$ with sectional curvature $c$. Then
\begin{equation*}
A_{\alpha}=\frac{1}{n}\lambda_{\alpha}{\rm Id},\quad \lambda_{\alpha}=H_{\alpha}=g(H,e_{\alpha}),
\end{equation*}
where $H$ is a mean curvature of $L$. By the multinomial theorem the characteristic polynomial $\chi_{\bf A}$ equals
\begin{equation*}
\chi_{\bf A}({\bf t})=\left(1+t_1\frac{\lambda_1}{n}+\ldots+t_q\frac{\lambda_q}{n}\right)^n
=\sum_{|u|\leq n}\frac{n!}{n^{|u|}(n-|u|)!u!}{\bf t}^u\pmb{\lambda}^{u},
\end{equation*}
where $\pmb{\lambda}=(\lambda_1,\ldots,\lambda_q)$. Hence
\begin{equation*}
\sigma_u=\frac{n!}{n^{|u|}(n-|u|)!u!}\pmb{\lambda}^u.
\end{equation*}
Therefore
\begin{equation*}
{\bf H}_u=\sum_{\alpha}\sigma_{\alpha_{\flat}(u)}e_{\alpha}
=\frac{n!}{n^{|u|-1}(n+1-|u|)!u!}\sum_{\alpha}u_{\alpha}\pmb{\lambda}^{\alpha_{\flat}(u)}e_{\alpha}
\end{equation*}
and, by Proposition \ref{prop:propTu},
\begin{equation*}
{\bf S}_u=\sum_{\alpha}{\rm tr}(A_{\alpha}T_u)e_{\alpha}
=\frac{n-|u|}{n}\sigma_u\sum_{\alpha}\lambda_{\alpha}e_{\alpha}
=\frac{n!}{n^{|u|+1}(n-1-|u|)!u!}\pmb{\lambda}^uH\circ\pi_P.
\end{equation*}
Thus by Theorem \ref{thm:uminimality}, $L$ is $u$--minimal if and only if for any $p\in L$
\begin{equation}\label{eq:exumbilical}
cn^2\int_{P_p}\sum_{\alpha}\pmb{\lambda}^{\alpha_{\flat}(u)}u_{\alpha}e_{\alpha}\,d{\rm vol}_{P_p}=(n-|u|)(n+1-|u|)\left( \int_{P_p}\pmb{\lambda}^u\,d{\rm vol}_{P_p}\right) H(p). 
\end{equation}
Evaluating the inner product of both sides with $H_L$, \eqref{eq:exumbilical} implies
\begin{equation}\label{eq:exumbilical2}
(cn^2|u|-(n-|u|)(n+1-|u|)|H(p)|^2)\left(\int_{P_p}\pmb{\lambda}^u\,d{\rm vol}_{P_p}\right)=0.
\end{equation} 
Hence, if $\int_{P_p}\pmb{\lambda}^u\,d{\rm vol}_{P_p}$ is not identically equal to zero and $H$ is not of  constant length, then $L$ is not $u$--minimal for any choice of $u$.

Taking the inner product of both sides of \eqref{eq:exumbilical} with a vector field $X\in \Gamma(T^{\bot}L)$ orthogonal to $H$ we get
\begin{equation}\label{eq:exumbilical3}
c\int_{P_p}\sum_{\alpha}\pmb{\lambda}^{\alpha_{\flat}(u)}u_{\alpha}\mu_{\alpha}\,d{\rm vol}_{P_p}=0,
\end{equation}
where $X=\sum_{\alpha}\mu_{\alpha}e_{\alpha}$.

Assume now $u_{\alpha}=k$ for all $\alpha$, where $k>1$ is even. Then, it can be shown that the function under the integral sign is constant, i.e. does not depend on the choice of the basis $(p,e)$. Thus we may evaluate this function on the basis $(p,e)\in P_p$ such that $e_1=\frac{1}{|X|}X$ and $e_2=\frac{1}{|H|}H$ (assuming $H_L\neq 0$). Then only nonzero elements in $\pmb{\lambda}$ and $\pmb{\mu}$ are $\lambda_2=|H|$ and $\mu_1=|X|$. Hence
\begin{equation*}
\sum_{\alpha}\pmb{\lambda}^{\alpha_{\flat}(u)}u_{\alpha}\mu_{\alpha}
=k\sum_{\alpha}\lambda_1^k\ldots \lambda_{\alpha}^{k-1}\ldots\lambda_q^k\mu_{\alpha}
=0.
\end{equation*}
Thus \eqref{eq:exumbilical3} holds. Concluding, we have the following corollary.
\begin{cor}\label{cor:umbilical}
Let $L$ be a totally umbilical codimension $q$ submanifold in the space form $M^{n+q}$ with sectional curvature $c$. If one of the following conditions hold
\begin{enumerate}
\item $c=0$ and $|u|=n$,
\item $u=(k,k,\ldots,k)$, where $k>1$ is even, and $(n-qk)(n+1-qk)|H|^2=cqkn^2$,
\end{enumerate} 
then $L$ is $u$--minimal.
\end{cor}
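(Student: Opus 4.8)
The plan is to verify the characterizing identity \eqref{eq:exumbilical} directly, exploiting that it is an equation between two vectors of $T^{\bot}_pL$ whose right--hand side is a multiple of $H(p)$. First I would record that, by Theorem \ref{thm:uminimality} together with the computations preceding the statement, $L$ is $u$--minimal if and only if \eqref{eq:exumbilical} holds at every $p\in L$. Splitting $T^{\bot}_pL=\mathbb{R}H\oplus H^{\bot}$ and pairing \eqref{eq:exumbilical} successively with $H$ and with an arbitrary $X\in\Gamma(T^{\bot}L)$ orthogonal to $H$, I would reduce \eqref{eq:exumbilical} to the conjunction of the scalar identities \eqref{eq:exumbilical2} (its $H$--component) and \eqref{eq:exumbilical3} (its $H^{\bot}$--components); for $H\neq 0$ this reduction is an equivalence, since $H$ together with a basis of $H^{\bot}$ spans $T^{\bot}_pL$. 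It then suffices to check these two identities under each hypothesis.

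Case (1) requires no computation: the assumption $c=0$ annihilates the left--hand side of \eqref{eq:exumbilical}, while $|u|=n$ forces the factor $(n-|u|)$ on the right--hand side to vanish, so both sides are zero.

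For case (2) I would first set aside the degenerate possibility $H=0$: total umbilicity then gives $A_{\alpha}=0$, so $L$ is totally geodesic and $u$--minimal by Example \ref{ex:1} (consistently, the hypothesis forces $c=0$ in this subcase). Assuming $H\neq 0$, the orthogonal components \eqref{eq:exumbilical3} have already been shown to hold for $u=(k,\dots,k)$ with $k$ even, so only the $H$--component \eqref{eq:exumbilical2} remains. Substituting $|u|=qk$, its bracket is $cn^2qk-(n-qk)(n+1-qk)|H|^2$, which vanishes exactly when $(n-qk)(n+1-qk)|H|^2=cqkn^2$, i.e. under the stated hypothesis; hence \eqref{eq:exumbilical2} holds irrespective of the value of $\int_{P_p}\pmb{\lambda}^u\,d{\rm vol}_{P_p}$. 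With both components verified, \eqref{eq:exumbilical} holds and $L$ is $u$--minimal.

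The step I expect to be the main obstacle, and the one deserving care, is the vanishing of the orthogonal components \eqref{eq:exumbilical3}. The delicate point is that the fibre integrand $\sum_{\alpha}\pmb{\lambda}^{\alpha_{\flat}(u)}u_{\alpha}\mu_{\alpha}$ is not genuinely constant along $P_p$, so one cannot argue merely by evaluating it at one convenient frame; the correct mechanism is the symmetry of the fibre integral. The cleanest route is to observe that the normal vector $\int_{P_p}\sum_{\alpha}\pmb{\lambda}^{\alpha_{\flat}(u)}u_{\alpha}e_{\alpha}\,d{\rm vol}_{P_p}$ is built $O(q)$--equivariantly out of the single distinguished vector $H$, hence is fixed by the stabilizer of $H$ and is therefore proportional to $H$; this is precisely the assertion that its pairing with every $X\perp H$ vanishes. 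The evenness of $k$ is what keeps $\hat{\sigma}_u$ from being identically zero, so that the condition \eqref{eq:exumbilical2} is a genuine constraint rather than a vacuous one.
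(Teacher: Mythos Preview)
Your overall strategy---reducing \eqref{eq:exumbilical} to the two scalar identities \eqref{eq:exumbilical2} and \eqref{eq:exumbilical3} by pairing with $H$ and with $X\perp H$, and then checking each under the stated hypotheses---is exactly the paper's. Case (1) and the $H$--component in case (2) are handled identically.

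Where you genuinely diverge is in the verification of \eqref{eq:exumbilical3}. The paper asserts that for $u=(k,\ldots,k)$ with $k$ even the integrand $\sum_{\alpha}\pmb{\lambda}^{\alpha_{\flat}(u)}u_{\alpha}\mu_{\alpha}$ is constant along the fibre $P_p$, and then evaluates it at the single frame $e_1=X/|X|$, $e_2=H/|H|$. You are right to flag this step: the integrand is \emph{not} constant along $P_p$ (already for $q=2$, $k=2$ it is proportional to $\cos\theta\,\sin\theta\,\cos 2\theta$ in the obvious parametrisation of $SO(2)$), so evaluating at one frame is not by itself a proof. Your replacement---observing that the fibre average $\int_{P_p}\sum_{\alpha}\pmb{\lambda}^{\alpha_{\flat}(u)}u_{\alpha}e_{\alpha}\,d{\rm vol}_{P_p}$ is built $O(q)$--equivariantly from the single vector $H$, hence is fixed by the stabiliser of $H$ and must be a multiple of $H$---is the correct mechanism and in fact more general: it nowhere uses the special form $u=(k,\ldots,k)$, so it shows that \eqref{eq:exumbilical3} holds for \emph{every} $u$ in the totally umbilical setting when $G=O(q)$. (For $G=SO(q)$ with $q=2$ the stabiliser of $H$ is trivial and the symmetry argument alone is inconclusive; there one still needs the direct integral, which does vanish.) Your argument thus both repairs the paper's reasoning and clarifies that the role of the hypothesis $u=(k,\ldots,k)$ with $k$ even is only to guarantee $\int_{P_p}\pmb{\lambda}^u\,d{\rm vol}_{P_p}>0$, so that \eqref{eq:exumbilical2} becomes a genuine constraint on $|H|$.
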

\end{exa}

\begin{exa}\label{ex:4}
Let us derive the formula for the $u$--minimality in the first nontrivial case, namely, for $u=\beta^{\sharp}\beta^{\sharp}(0)$, where $\beta^{\sharp}(v)$ denotes the multi--index obtained from $v$ by adding $1$ on $\alpha$--th coordinate. Assume that $M$ is of constant sectional curvature $c$. By the properties of the generalized Newton transformation we have
\begin{equation*}
T_u=\sigma_u I-\sigma_{\beta^{\sharp}(0)}A_{\beta}+A_{\beta}^2,
\end{equation*}
where
\begin{equation*}
\sigma_u=\frac{1}{2}\left(g(H,e_{\beta})^2-|A_{\beta}|^2\right),\quad
\sigma_{\beta^{\sharp}(0)}=g(H,e_{\beta}).
\end{equation*}
Therefore, after some computations,
\begin{align*}
{\bf H}_u &=g(H,e_{\beta})e_{\beta},\\
{\bf S}_u &=\frac{1}{2}\left(g(H,e_{\beta})^2-|A_{\beta}|^2\right)H\circ\pi_P-
g(H,e_{\beta})\sum_{\alpha}{\rm tr}(A_{\alpha}A_{\beta})e_{\alpha}+
\sum_{\alpha}{\rm tr}(A_{\alpha}A_{\beta}^2)e_{\alpha}.
\end{align*}
Notice that the integrals of ${\bf H}_u$ and ${\bf S}_u$ do not depend on the choice of $\beta$. Therefore, we may consider the following sections
\begin{equation*}
\sum_{\beta}{\bf H}_{\beta^{\sharp}\beta^{\sharp}(0)}=H\circ \pi_P
\end{equation*} 
and
\begin{equation*}
\sum_{\beta} {\bf S}_{\beta^{\sharp}\beta^{\sharp}(0)}=\left(
\frac{1}{2}(|H|^2-|B|^2)H-{\rm tr}(B\circ A^H)+{\rm tr}(B\circ A^2)\right)\circ\pi_P,
\end{equation*}
where $A^2=\sum_{\beta}A_{\beta}^2$ and the composition means substituting on one of the factors, i.e., $B\circ A^H=B(A^H,\cdot)=B(\cdot, A^H)$, etc. Thus the condition for $u$--minimality is equivalent to the following equation
\begin{equation*}
\left(\frac{1}{2}(|H|^2-|B|^2)-c(n-1)\right)H
-{\rm tr}(B\circ A^H)+{\rm tr}(B\circ A^2)=0.
\end{equation*}
Assuming, additionally, that $L$ is minimal the above formula simplifies to
\begin{equation*}
{\rm tr}(B\circ A^2)=0.
\end{equation*}

Consider the second standard immersion $\varphi:S^2(1)\mapsto S^4(\frac{1}{\sqrt{3}})$ \cite{Che}. Then $S^2(1)$ is minimal but not totally geodesic. It was shown in \cite{KN} that the second fundamental form $B$ of this immersion satisfies the following relation
\begin{equation*}
g(B(X,Z),B(Y,Z))=g(X,Y)g(Z,Z),\quad X,Y,Z\in TS^2(1).
\end{equation*}
Notice that the operator $A^2$, by the above formula, equals
\begin{equation*}
A^2X=\sum_{j,k}g(B(X,e_k),B(e_j,e_k))e_j=2\sum_jg(X,e_j)e_j=2X.
\end{equation*}
Thus $A^2=2 I$ and, therefore, ${\rm tr}(B\circ A^2)=2{\rm tr}B=2H=0$.  Finally, the second standard immersion is $\beta^{\sharp}\beta^{\sharp}(0)$--minimal for any choice of $\beta$. 
\end{exa}

\end{document}